\def\fa{{\mathcal{F}}}
\def\O{{\mathcal{O}}}
\def\M{{\mathcal{M}}}
\def\D{{\mathcal{D}}}
\def\U{{\mathcal{U}}}
\def\B{{\mathcal{B}}}
\title[Irreducible holonomy groups]{Irreducible holonomy groups and first
integrals for holomorphic foliations}
\author{V. Le\'on}
\author{M. Martelo}
\author{B. Sc\'ardua}
\address{V. Le\'on. ILACVN - CICN, Universidade Federal da
Integração Latino-Americano, Parque tecnológico de Itaipu,
Foz do Iguaçu-PR, 85867-970 - Brazil}
\email{victor.leon@unila.edu.br}
\address{M. Martelo. Instituto de Matem\'atica -
Universidade Federal Fluminense, Niter\'oi -
Rio de Janeiro-RJ, 24210-201 - Brazil}
\email{mitchaelmartelo@id.uff.br}
\address{B. Sc\'ardua. Instituto de Matem\'atica -
Universidade Federal do Rio de Janeiro,
CP. 68530-Rio de Janeiro-RJ, 21945-970 - Brazil}
\email{scardua@im.ufrj.br}
\subjclass[2000]{Primary 37F75, 57R30; Secondary 32M25, 32S65.}
\keywords{Holomorphic foliation, complex diffeomorphism, irreducible group}
\date{}
\begin{document}

\begin{abstract}
We study  groups of germs of  complex diffeomorphisms  having a
property called {\it irreducibility}. The notion is motivated by the
similar property of the fundamental group of the complement of an
irreducible hypersurface in the complex projective space. Natural
examples of such groups of germ maps are given by holonomy groups
and monodromy groups of integrable systems (foliations) under
certain conditions. We prove some finiteness results for these
groups extending previous results in \cite{CL}. Applications are
given to the framework of germs of holomorphic foliations. We prove
the existence of first integrals under certain irreducibility or
more general  conditions on the tangent cone of the foliation after
a punctual blow-up.
 \end{abstract}

\maketitle
\newtheorem{theorem}{Theorem}
\renewcommand*{\thetheorem}{\Alph{theorem}}

\newtheorem{Theorem}{Theorem}[section]
\newtheorem{Corollary}{Corollary}[section]
\newtheorem{Proposition}{Proposition}[section]
\newtheorem{Lemma}{Lemma}[section]
\newtheorem{Claim}{Claim}[section]
\newtheorem{Definition}{Definition}[section]
\newtheorem{Example}{Example}[section]
\newtheorem{Remark}{Remark}[section]
\newtheorem*{cltheorem}{Theorem}
\newtheorem{Question}{Question}[section]

\newcommand\virt{\rm{virt}}
\newcommand\SO{\rm{SO}}
\newcommand\G{\varGamma}
\newcommand\Om{\Omega}
\newcommand\Kbar{{K\kern-1.7ex\raise1.15ex\hbox to 1.4ex{\hrulefill}}}
\newcommand\codim{\rm{codim}}
\renewcommand\:{\colon}
\newcommand\s{\sigma}
\def\vol#1{{|{\bfS}^{#1}|}}

\def\fa{{\mathcal F}}
\def\H{{\mathcal H}}
\def\O{{\mathcal O}}
\def\P{{\mathcal P}}
\def\L{{\mathcal L}}
\def\C{{\mathcal C}}
\def\Z{{\mathcal Z}}

\def\M{{\mathcal M}}
\def\N{{\mathcal N}}
\def\R{{\mathcal R}}
\def\ea{{\mathcal e}}
\def\Oa{{\mathcal O}}
\def\ee{{\bfE}}

\def\A{{\mathcal A}}
\def\B{{\mathcal B}}
\def\H{{\mathcal H}}
\def\V{{\mathcal V}}
\def\U{{\mathcal U}}
\def\al{{\alpha}}
\def\be{{\beta}}
\def\ga{{\gamma}}
\def\Ga{{\Gamma}}
\def\om{{\omega}}
\def\Om{{\Omega}}
\def\La{{\Lambda}}
\def\ov{\overline}
\def\dd{{\bfD}}
\def\pp{{\bfP}}

\def\nn{{\mathbb N}}
\def\zz{{\mathbb Z}}
\def\bq{{\mathbb Q}}
\def\bp{{\mathbb P}}
\def\bd{{\mathbb D}}
\def\bh{{\mathbb H}}
\def\te{{\theta}}
\def\rr{{\mathbb R}}
\def\bb{{\mathbb B}}
\def\pp{{\mathbb P}}
\def\dd{{\mathbb D}}
\def\zz{{\mathbb Z}}
\def\qq{{\mathbb Q}}
\def\hh{{\mathbb H}}
\def\nn{{\mathbb N}}
\def\LL{{\mathbb L}}
\def\co{{\mathbb C}}
\def\qq{{\mathbb Q}}
\def\na{{\mathbb N}}
\def\esima{${}^{\text{\b a}}$}
\def\esimo{${}^{\text{\b o}}$}
\def\lg{\lambdangle}
\def\rg{\rangle}
\def\ro{{\rho}}
\def\lV{\left\Vert}
\def\rV{\right\Vert }
\def\lv{\left\vert}
\def\rv{\right\vert }
\def\Sa{{\mathcal S}}
\def\D{{\mathcal D  }}

\def\si{{\bf S}}
\def\ve{\varepsilon}
\def\vr{\varphi}
\def\lV{\left\Vert }
\def\rV{\right\Vert}
\def\lv{\left\vert }
\def\rv{\right\vert}
\def\Range{\rm{{R}}}
\def\vol{\rm{{Vol}}}
\def\ind{\rm{{i}}}

\def\Int{\rm{{Int}}}
\def\Dom{\rm{{Dom}}}
\def\supp{\rm{{supp}}}
\def\Aff{\mbox{Aff}}
\def\Exp{\rm{{Exp}}}
\def\Hom{\rm{{Hom}}}
\def\codim{\rm{{codim}}}
\def\cotg{\rm{{cotg}}}
\def\dom{\rm{{dom}}}
\def\Sa{\mathcal{{S}}}

\def\VIP{\rm{{VIP}}}
\def\argmin{\rm{{argmin}}}
\def\Sol{\rm{{Sol}}}
\def\Ker{\rm{{Ker}}}
\def\Sat{\rm{{Sat}}}
\def\diag{\rm{{diag}}}
\def\rank{\rm{{rank}}}
\def\Sing{\rm{{Sing}}}
\def\sing{\rm{{sing}}}
\def\hot{\rm{{h.o.t.}}}

\def\Fol{\rm{{Fol}}}
\def\grad{\rm{{grad}}}
\def\id{\rm{{id}}}
\def\Id{\rm{{Id}}}
\def\sep{\rm{{Sep}}}
\def\Aut{\rm{{Aut}}}
\def\Sep{\rm{{Sep}}}
\def\Res{\rm{{Res}}}
\def\ord{\rm{{ord}}}
\def\h.o.t.{\rm{{h.o.t.}}}
\def\Hol{\mbox{Hol}}
\def\Diff{\mbox{Diff}}
\def\SL{\rm{{SL}}}

\tableofcontents
\section{Introduction}

Let $C\subset \mathbb P^2$ be an algebraic curve of degree $\nu$ in
the complex projective plane. By a classical theorem of Zariski and
Fulton-Deligne (\cite{Deligne,Fulton}), if $C\subset \mathbb P^2$ is
irreducible and smooth or with only normal double crossings
singularities, then the fundamental group of the complement $\mathbb
P^2 \setminus C$ is finite abelian, isomorphic to $\mathbb Z / \nu
\mathbb Z$. Indeed, in Deligne's paper it is observed that, in the
irreducible case, the fundamental group of the complement $\mathbb
P^2\setminus C$ is isomorphic to the local fundamental group
$\pi_1(V\setminus C)$ where $V$ is a suitable arbitrarily small
neighborhood of $C$ in $\mathbb P^2$. As a consequence, the group
$\pi_1(\mathbb P^2 \setminus C)$ has the following irreducibility
property: given two simple loops $\alpha, \beta $ in $\mathbb
P^2\setminus C$,  there is a loop $\delta \subset \mathbb P^2
\setminus C$ that conjugates the classes $[\alpha], [\beta] \in
\pi_1(\mathbb P^2\setminus C)$. For this it is not necessary to
assume that the curve is smooth or has only double points as
singularities, just the irreducibility of the curve is required. We
shall use this as a motivation for the definition of  an {\it
irreducibility} property for a  group of germs of analytic
diffeomorphisms.

Expand a  germ of a complex diffeomorphism $f$ at the origin $0 \in
\mathbb C$ as $f(z)= e^{2 \pi i \lambda} z + a_{k+1} z^{k+1} + ...$.
The {\it multiplier} $f^\prime(0)=e^{2 \pi i \lambda}$ does not
depend on the coordinate system. We shall say that the germ $f \in
\Diff(\mathbb C,0)$ is {\it flat} or {\it tangent to the identity}
if $f^\prime(0) =1$.

Using these ideas and some features from arithmetics,  in \cite{CL}
the authors  prove a finiteness theorem for groups of germs of
complex diffeomorphisms  in one variable. This reads as follows:

\begin{Theorem}[\cite{CL},  Theorem~1
page 221] \label{Theorem:CL} Let $G$ be a subgroup of $\Diff(\mathbb
C,0)$ generated by elements $f_1,...,f_{\nu+1}$. Assume that:
\begin{enumerate}
\item $f_k(z)= \mu z + \cdots$ with $\mu=e^{2 i\pi/ \nu+1}$ and
$f_{\nu+1} \circ \cdots \circ f_1=\id$.

\item the $f_k$ are pairwise conjugate in $G$.

\item $\nu+1 = p^s$ with $p $ prime and $s\in \mathbb N^*$.
\end{enumerate}
Then $G$ is a finite group; in particular $G$ is conjugate to a
finite group of rotations fixing $0$.

\end{Theorem}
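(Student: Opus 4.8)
The plan is to control $G$ through its linear part and to concentrate the entire difficulty in the subgroup of flat germs. Let $L\colon G\to\mathbb{C}^{*}$, $L(f)=f'(0)$, be the multiplier homomorphism. Each generator has $L(f_k)=\mu$ of order $\nu+1$, so $L(G)=\langle\mu\rangle\cong\mathbb{Z}/(\nu+1)\mathbb{Z}$. Since the $f_k$ are pairwise conjugate in $G$, all the classes $[f_k]$ coincide in the abelianization $G^{ab}$; writing $c$ for this common class, the relation $f_{\nu+1}\circ\cdots\circ f_1=\id$ gives $(\nu+1)c=0$, while $L(c)=\mu$ has order $\nu+1$. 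Hence $G^{ab}\cong\mathbb{Z}/(\nu+1)\mathbb{Z}$ with $L$ inducing the isomorphism. Consequently $\ker L=[G,G]$, and since $\ker L$ is exactly the set of germs tangent to the identity in $G$, the commutator subgroup $G_1:=[G,G]$ coincides with the flat part of $G$.

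This reduces the theorem to proving $G_1=\{\id\}$. Indeed, a nontrivial flat germ $g(z)=z+az^{m+1}+\cdots$ with $a\neq0$ satisfies $g^{n}(z)=z+naz^{m+1}+\cdots$, so it has infinite order; thus if $G$ is finite then necessarily $G_1=\{\id\}$, and conversely if $G_1=\{\id\}$ then $L$ embeds $G$ into $\langle\mu\rangle$, so $G\cong\mathbb{Z}/(\nu+1)\mathbb{Z}$ is finite. Once finiteness is established, the last assertion follows from the classical averaging (Bochner--Cartan) linearization: the map $\phi(z)=\frac{1}{|G|}\sum_{g\in G}g'(0)^{-1}g(z)$ conjugates $G$ to its group of linear parts, a finite group of rotations fixing $0$.

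To prove $G_1=\{\id\}$ I would argue by contradiction. Assume $G_1\neq\{\id\}$ and let $m\geq1$ be the minimal order of tangency among its nontrivial elements. For $g\in G_1$ write $g(z)=z+c(g)z^{m+1}+\cdots$; minimality of $m$ makes $c\colon G_1\to(\mathbb{C},+)$ a group homomorphism with nonzero image $A$. A direct jet computation shows that conjugation by a germ of multiplier $\mu^{j}$ multiplies the leading coefficient by $\mu^{-jm}$, so normality of $G_1$ forces $A$ to be invariant under multiplication by $\mu^{-m}$, hence to be a module over $\mathbb{Z}[\mu^{m}]$. Here the hypothesis $\nu+1=p^{s}$ enters: if $\mu^{m}\neq1$ then $\langle\mu^{m}\rangle$ is cyclic of order a power of $p$ and therefore contains a primitive $p$-th root of unity $\omega$, so $A$ is a nonzero $\mathbb{Z}[\omega]$-module.

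The main obstacle, and the crux of the argument, is to convert this rigidity into a contradiction. Since $G_1=[G,G]$, the module $A$ is generated over $\mathbb{Z}[\omega]$ by the finitely many leading coefficients $c([f_i,f_j])$. The plan is to read the product relation $f_{\nu+1}\circ\cdots\circ f_1=\id$ at the level of $(m+1)$-jets: the order-$(m+1)$ part of the composition yields a linear relation among the relevant coefficients whose scalar weights are powers of $\mu$, that is, a cyclotomic sum governed by $\nu+1=p^{s}$. I expect the prime-power hypothesis to force this weight to be a unit (or to vanish) in $\mathbb{Z}[\omega]$, collapsing $A$ to $0$ and contradicting $A\neq0$. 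Thus no nontrivial flat germ exists, $G_1=\{\id\}$, and by the reduction above $G$ is finite and conjugate to a finite group of rotations. Extracting the correct cyclotomic weight from the jet expansion of the product relation, and exploiting that $\nu+1$ is a prime power, is the delicate point, which is precisely why hypotheses (2) and (3) are needed.
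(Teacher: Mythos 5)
Your reduction steps are correct and cleanly organized: the multiplier homomorphism $L$, the identification $\ker L=[G,G]=$ (flat part) via the abelianization, the observation that a nontrivial flat germ has infinite order, the homomorphism $c\colon G_1\to(\mathbb C,+)$ at the minimal tangency order $m$, and the fact that conjugation scales leading coefficients by $\mu^{-jm}$, so that $A=c(G_1)$ is a $\mathbb Z[\mu^m]$-module. The Bochner averaging at the end is also fine. But the proof has a genuine gap exactly where you flag ``the delicate point'': you never derive a contradiction from $A\neq 0$. A nonzero finitely generated subgroup of $(\mathbb C,+)$ stable under multiplication by a root of unity is not contradictory by itself (e.g.\ $\mathbb Z[\omega]$), and hypothesis (3) is never actually invoked --- it is only ``expected'' to work. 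Since the paper exhibits explicit counterexamples with $\nu+1=6$ (its Example 4.1), any correct argument must use $\nu+1=p^s$ in an essential, verifiable way; a proof that does not is necessarily incomplete.

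Moreover, you appear to misplace where the prime-power hypothesis enters. Reading the relation $f_{\nu+1}\circ\cdots\circ f_1=\id$ on $(k+1)$-jets (after linearizing to order $k$, $f_i=\mu z+t_iz^{k+1}+\cdots$) produces the weight $\sum_i \mu^{ik}$, which equals $0$ when $(\nu+1)\nmid k$ and $\nu+1$ when $(\nu+1)\mid k$ --- for \emph{every} $\nu$, prime power or not. So the ``cyclotomic weight'' cannot be where $\nu+1=p^s$ acts. In the proof the paper gives (for its Theorem B, which contains Theorem A), the hypothesis enters through a lemma of Cerveau--Loray on affine groups: if $h_i(z)=\eta z+\beta_i$ are pairwise conjugate in the group they generate and the order of $\eta$ is a prime power $>1$, then all $\beta_i$ coincide. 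Applying this to the image of $G$ under the morphism $az+bz^{k+1}+\cdots\mapsto (az+b)/a^{k+1}$ in the non-resonant case $(\nu+1)\nmid k$ forces $t_1=\cdots=t_{\nu+1}$, after which the resonance dichotomy either kills the $t_i$ (resonant case, via the morphism $f\mapsto b/a$ into $(\mathbb C,+)$ and the product relation) or removes them by a polynomial conjugation (non-resonant case), yielding formal linearization order by order and hence finiteness. Your commutator-subgroup framing is a genuinely different and potentially more structural route, but as written it stops short of the arithmetic step that actually distinguishes $p^s$ from, say, $6$.
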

Here we have $\mathbb N=\{0,1,2,...\}$ and $\mathbb N ^*= \mathbb N
\setminus \{0\}=\{1,2,...\}$. Notice that if $\nu+1=2$ so
that $G=<f_1,f_2>$ where $f_1\circ f_2=\id$. This implies that
$G=<f_1>$ is cyclic. Since $f_1$ and $(f_1)^{-1}$ are conjugate in
$G$ we have $f_1=(f_1)^{-1}$ and therefore $(f_1)^2=\id$. Thus $G$
is finite of order $\leq 2$.

 As a consequence of Theorem~\ref{Theorem:CL} the same authors obtain a theorem
(cf. \cite{CL} Theorem~0 page 218) about the existence of a
holomorphic first integral for a germ of holomorphic foliation of
codimension one singular at the origin $0 \in\mathbb C^n, n \geq 2$
provided that: (i) the foliation is not dicritic; (ii) the tangent
cone of the foliation is irreducible of degree $\nu +1=p^s$ where
$p$ is a prime number and $s\in \mathbb N^*$.

It is our aim in this paper to study further the  classification of
groups of germs of complex diffeomorphisms exhibiting this
irreducibility property and possible  applications of this to the
study of holomorphic foliations singularities. We shall then study
groups for which the  irreducibility holds but with a number of
generators not necessarily of the form $p^s$ for $p$ prime.

 For simplicity we consider the group $\Diff(\mathbb C,0)$ of germs of
complex diffeomorphisms fixing the origin $0 \in \co$. We shall
start with a definition:

\begin{Definition}[irreducible group]
\label{Definition:irreduciblegroup}
{\rm A subgroup $G \subset \Diff(\mathbb C,0)$ is {\it irreducible} if it admits a
finite set of generators
$f_1,f_2,\ldots,f_{\nu +1}\in G$ such that:
\begin{enumerate}
\item[(a)] $f_1\circ f_2\circ\ldots\circ f_{\nu+1}=\id$.
\item[(b)] $f_i$ and $f_j$ are conjugate in $G$ for all $i,j$.
    \end{enumerate}}

\end{Definition}
The above maps $f_j$ will be called {\it basic generators} of the
group $G$. By definition an irreducible group is finitely generated.
The above definition does not exclude the possibility that we have a
repetition of basic generators, ie.,  $f_i=f_j$. Note also that an
irreducible abelian group is finite cyclic: indeed, since the group
is abelian we have $f_i=f_j$, for all $i,j$. Therefore the group is
generated by an element $f_1$ of finite order because $f_1^{\nu +1}
=\id$.

Let us give some examples illustrating our above definition:

\begin{Example}
{\rm Every cyclic subgroup {\it of finite order} $G\subset
\Diff(\mathbb C,0)$ is irreducible. In particular every finite
subgroup $G \subset \Diff(\mathbb C,0)$ is irreducible. Indeed, such
a group is analytically conjugate to a cyclic group  of rational
rotations say, $G=<z\mapsto e^{2 \pi i /\nu} z>$ for some $\nu
\in\mathbb N^*$. On the other hand a cyclic group is irreducible
only if it has finite order. If $G\subset \Diff(\mathbb C,0)$ is
abelian and irreducible then $G$ is also finite. Indeed, all
generators $f_1,...,f_{\nu+1}$ in the definition of irreducible
group are conjugate so $f_1=\ldots=f_{\nu+1}$ and then
$f_1^{\nu+1}=\id$. This shows that $G$ is finite. Later on we shall
give examples of (nonabelian) irreducible groups (see
\S~\ref{section:examples}).}
\end{Example}

A (more geometrical) example is given below:

\begin{Example}

{\rm Let $C\subset \mathbb P^2$ be an algebraic curve of degree
$\nu$.  By a classical theorem of Zariski and Fulton-Deligne, if
$C\subset \mathbb P^2$  is irreducible and smooth or with only
normal double crossings singularities, then the fundamental group of
the complement $\mathbb P^2 \setminus C$ is finite abelian,
isomorphic to $\mathbb Z / \nu \mathbb Z$. If this the case, then
any representation $\vr\colon \pi_1(\mathbb P^2\setminus C) \to
\mathbb \Diff(\mathbb C,0)$ has as image an irreducible group. For
instance, assume that we have a closed rational one-form $\Omega$ on
$\mathbb P^2$ with polar divisor $(\Omega)_\infty =C$ irreducible.
Then the monodromy of $\Omega$ gives an irreducible subgroup of
$\Diff(\mathbb C,0)$. }
\end{Example}

Now we extend   Theorem~\ref{Theorem:CL} for the case some of
generators has a multiplier which is a  root of the unity of order
$p^s$ for a prime $p$ and natural number $s \geq 0$, {\sl but
without further restrictions on the number of generators}. This
includes the case where some of the basic generators is tangent to
the identity.

\begin{theorem}
\label{Theorem:2irreducibledim1} Let $G\subset \Diff(\mathbb C,0)$
be an irreducible  group.  If the multiplier of some basic generator
is a root of the unit of order $p^s$, where $p$ is prime and
$s\in\mathbb{N}$, then $G$ is a finite group.
\end{theorem}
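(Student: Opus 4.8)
The plan is to peel off the finite-order linear part and reduce the theorem to the assertion that $G$ contains no nontrivial germ tangent to the identity. First I would observe that, since the multiplier $\mathrm{Lin}(f)=f'(0)$ is invariant under conjugation and all basic generators $f_1,\dots,f_{\nu+1}$ are pairwise conjugate in $G$, they share a common multiplier $\mu$. Applying the homomorphism $\mathrm{Lin}\colon\Diff(\mathbb{C},0)\to\mathbb{C}^{\ast}$ to the relation $f_1\circ\cdots\circ f_{\nu+1}=\id$ gives $\mu^{\nu+1}=1$; by hypothesis $\mu$ has order $p^s$, so either $\mu=1$ (when $s=0$) or $\mu$ is a primitive $p^s$-th root of unity with $p^s\mid(\nu+1)$. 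Since $\mathrm{Lin}(G)=\langle\mu\rangle\cong\mathbb{Z}/p^s\mathbb{Z}$, the subgroup $G_1=\ker(\mathrm{Lin}|_G)$ of germs tangent to the identity has finite index $p^s$ in $G$ and is therefore finitely generated. A finite subgroup of $\Diff(\mathbb{C},0)$ is linearizable and hence contains no nontrivial flat germ, so a nontrivial tangent-to-identity germ has infinite order; consequently $G$ is finite if and only if $G_1=\{\id\}$, and the whole problem reduces to proving $G_1=\{\id\}$.

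The flat case $s=0$ already displays the mechanism and is the genuinely new phenomenon. Here every $f_i$ lies in $G_1$, so $G=G_1$ is generated by flat germs and all of its elements are tangent to the identity. If some $f_i\neq\id$, let $k\geq1$ be its order of contact, so $f_i(z)=z+a_iz^{k+1}+\cdots$ with $a_i\neq0$; conjugation by a flat germ preserves both $k$ and the coefficient $a_i$, so pairwise conjugacy forces all $f_i$ to have the same $k$ and the same leading coefficient $a:=a_1=\cdots=a_{\nu+1}$. The map $g=z+c_gz^{k+1}+\cdots\mapsto c_g$ is a homomorphism from the group of flat germs of contact $\geq k$ into $(\mathbb{C},+)$, and evaluating it on $f_1\circ\cdots\circ f_{\nu+1}=\id$ yields $\sum_i a_i=(\nu+1)\,a=0$, contradicting $a\neq0$. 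Hence all $f_i=\id$ and $G=\{\id\}$.

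For $s\geq1$ the same linear-jet idea must be combined with the arithmetic engine of Theorem~\ref{Theorem:CL}. Assuming $G_1\neq\{\id\}$, let $k$ be the least order of contact attained by a nontrivial element of $G_1$ and let $\phi\colon G_1\to(\mathbb{C},+)$, $\phi(g)=c_g$ (the coefficient of $z^{k+1}$), which is a nonzero homomorphism. Conjugation by an element of $G$ with multiplier $\mu^{j}$ acts on $\phi$ by multiplication by $\mu^{-jk}$, so $\phi(G_1)\subset\mathbb{C}$ is stable under multiplication by the $p^s$-th root of unity $\mu^{-k}$. Writing the basic generators in resonant normal form and extracting the lowest-order obstruction in the relation $f_1\circ\cdots\circ f_{\nu+1}=\id$ then produces a vanishing sum of the shape $\sum_{i=1}^{\nu+1}\mu^{-k t_i}=0$, where $\mu^{t_i}$ is the multiplier of the conjugator carrying $f_1$ to $f_i$. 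The decisive point is that every term of this sum is a $p^s$-th root of unity: although the number of terms $\nu+1$ need not be a prime power, the relevant roots of unity have prime-power order $p^s$, and this is exactly the setting in which vanishing sums of roots of unity are rigid — any such relation is a non-negative integral combination of the elementary relation $1+\omega+\cdots+\omega^{p-1}=0$ for $\omega$ a primitive $p$-th root of unity.

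The main obstacle is precisely this last step: turning the coset structure of the exponents $\{t_i \bmod p^s\}$ forced by the vanishing-sum lemma into the vanishing of $\phi$, and then propagating it to all higher orders of contact so as to conclude $G_1=\{\id\}$. I expect this to require an induction on the order of contact together with the solvable/nonsolvable dichotomy for finitely generated subgroups of $\Diff(\mathbb{C},0)$: in the solvable case one uses an explicit metabelian normal form to linearize the relation above, while the nonsolvable case must be excluded because its rich (Nakai-type) dynamics is incompatible with the prime-power rigidity just described. It is the prime-power order of the multiplier $\mu$ — rather than any restriction on the number of generators — that makes the arithmetic go through, which is exactly how the hypothesis of Theorem~\ref{Theorem:CL} is used here in a more flexible form.
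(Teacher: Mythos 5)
Your reduction to showing that the flat subgroup $G_1=\ker(f\mapsto f'(0))\cap G$ is trivial is sound, and your treatment of the flat case is correct and in fact cleaner than the paper's: where the paper runs an induction on derivatives via a Leibniz-type formula, you observe directly that conjugation by tangent-to-identity germs preserves the contact order $k$ and the leading coefficient, so all $f_i$ share a leading coefficient $a$, and the additive homomorphism $z+cz^{k+1}+\cdots\mapsto c$ applied to $f_1\circ\cdots\circ f_{\nu+1}=\id$ gives $(\nu+1)a=0$, forcing $a=0$. That part stands.

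The case $s\geq 1$, however, is not proved; you say so yourself (``the main obstacle is precisely this last step''), and the sketch you give would not close the gap as written. The relation $f_1\circ\cdots\circ f_{\nu+1}=\id$ involves elements that are \emph{not} in $G_1$, so it is not clear how to ``extract the lowest-order obstruction'' as a vanishing sum $\sum_i\mu^{-kt_i}=0$ of $p^s$-th roots of unity: you would first have to produce words in the generators lying in $G_1$ of controlled contact order and justify that the resonant normal forms of the $f_i$ interact in the claimed way, and the subsequent appeal to a solvable/nonsolvable dichotomy and Nakai-type dynamics is neither justified nor needed. The paper's actual mechanism is more direct: one linearizes $G$ itself order by order. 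Assuming $f_i(z)=\mu z+t_iz^{k+1}+\cdots$, when $\mu^k=1$ the map $az+bz^{k+1}+\cdots\mapsto b/a$ is a homomorphism into $(\mathbb{C},+)$, and relation (a) plus pairwise conjugacy force all $t_i$ equal and summing to zero, hence zero; when $\mu^k\neq 1$ one instead pushes forward to the affine group via $az+bz^{k+1}+\cdots\mapsto (az+b)/a^{k+1}$ and invokes the lemma of Cerveau--Loray that an affine group generated by pairwise conjugate maps $\eta z+\beta_i$ with $\eta$ a root of unity of prime-power order must have all $\beta_i$ equal --- this is where the hypothesis $\mathrm{ord}(\mu)=p^s$ enters (and it is indeed the same prime-power rigidity of vanishing sums of roots of unity you identified, but packaged as a statement about conjugacy in affine groups rather than extracted from the composition relation). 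The common coefficient is then removed by conjugating with $z\mapsto z+\frac{t_1}{\mu-\mu^{k+1}}z^{k+1}$, and the algorithm converges in the Krull topology to a formal linearization, whence $G$ is finite. To complete your proof you would need either to supply this order-by-order linearization or to make rigorous the extraction of your vanishing-sum relation; as it stands the central arithmetic step is asserted, not established.
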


A straightforward consequence of
Theorem~\ref{Theorem:2irreducibledim1} is that if some basic
generator is tangent to the identity ( ie., $s=0$) then $G$ is
trivial. Indeed, a finite subgroup $G\subset \Diff(\mathbb C,0)$ is
analytically linearizable.

 The fact that the basic generators are conjugate in the group $G$, not
only in $\Diff(\mathbb C,0)$ (condition (b) in
Definition~\ref{Definition:irreduciblegroup} above),  is essential
(cf. Example~\ref{Example:essential}).

Let us consider a germ of codimension one holomorphic foliation with
a singularity at the origin $0 \in \mathbb C^n, n \geq 2$. This is
given by an integrable germ of a holomorphic one-form $\omega$ at
$0$. The integrability condition writes $\omega \wedge d \omega=0$
and we may assume that $codim \sing(\omega)\geq 2$. In the case
where $codim \sing(\omega) \geq 3$ a theorem of Malgrange
(\cite{malgrange}) assures the existence of a holomorphic first
integral, ie., a germ of a holomorphic function $f \in \mathcal O_n$
such that $ df \wedge \omega=0$. Denote by $\pi \colon \tilde
{\mathbb C^n} \to \mathbb C^n$ the blow-up at the origin of $\mathbb
C^n$. The {\it tangent cone} of $\fa$ is defined as the intersection
$C(\fa)$ of the singular set of the lifted foliation $\tilde
\fa=\pi^*(\fa)$ with the exceptional divisor $E=\pi^{-1}(0)\cong
\mathbb P^{n-1}$, ie., $C(\fa) = \sing(\tilde \fa) \cap E$
(\cite{Cerveau-Mattei}). This is a (not necessarily irreducible)
algebraic subset of $\mathbb P^{-1}$ of codimension $\geq 1$. Assume
now that $\fa$ is non dicritical, ie., that $E$ is invariant by
$\tilde \fa$. In this case we have a leaf $\tilde L= E\setminus
C(\fa)$ of $\tilde \fa$. This leaf has a holonomy group called {\it
projective holonomy} of the foliation $\fa$. It can be viewed as
follows: take a point $\tilde q \in \tilde L$ and a transverse disc
$\Sigma$ to $\tilde L$ with center at $\tilde q$. Then consider the
representation of the fundamental group $\pi(\tilde L,\tilde q)$ in
the group $\Diff(\Sigma, \tilde q)$ of germs of complex
diffeomorphisms of $\Sigma$ fixing $\tilde q$, obtained as the image
of the holonomy maps of the loops $\gamma$ in $\tilde L$ based at
$\tilde q$. The projective holonomy group is the image $H(\fa)$ of
this representation, that can may be also viewed as a subgroup of
$\Diff(\mathbb C,0)$ under an identification $(\Sigma, \tilde q)=
(\mathbb C,0)$. Given a codimension $\geq 2$ component
$\Lambda_2\subset C(\fa)$ there is an isomorphism between
fundamental groups $\pi_1(E) \cong \pi_1(E \setminus \Lambda_2)$
because the real codimension of $\Lambda_2$ in $E$ is $\geq 4$. Thus
if $C_1(\fa)\subset  C(\fa)$ denotes the union of all codimension
one irreducible components of $C(\fa)$ we have $\pi_1(E\setminus
C_1(\fa))\cong \pi_1(E\setminus  C(\fa))=\pi_1(\tilde L)$. Given a
small simple loop $\gamma \subset E \setminus C(\fa)$ around an
irreducible component of codimension one of $C(\fa)$ (such loops
generate the fundamental group $\pi_1(\mathbb P^2 \setminus C(\fa))$
\cite{Deligne}) we shall call the corresponding holonomy map a {\it
basic holonomy generator} of the projective holonomy of $\fa$.

Assume now that the codimension one component of $C(\fa)$ is
irreducible. The projective holonomy group is the representation of
an group which by Deligne's theorem mentioned in the beginning, has
the irreducibility property. Thus $H(\fa)\subset \Diff(\mathbb C,0)$
is an irreducible group, having as basic generators  in
Definition~\ref{Definition:irreduciblegroup}, the above introduced
basic holonomy generators of the projective holonomy. Notice that we
do know whether $ C(\fa)$ is smooth or has only ordinary points, so
we cannot apply Deligne's results and assure that $H(\fa)$ is
finite. The finiteness of $H(\fa)$ is assured under some conditions
as in the case studied in \cite{CL}. As an application of
Theorem~\ref{Theorem:2irreducibledim1} we may extend Theorem~0 in
\cite{CL} as follows:

\begin{theorem}
\label{Theorem:firstintegral1} Let $\fa$ be a non-dicritic germ of
codimension one holomorphic  foliation with a singularity at the
origin $ 0 \in \mathbb C^n, n \geq 3$. Suppose that:

\begin{enumerate}

\item The codimension one component of the
tangent cone $ C(\fa)$ of $\fa$ is irreducible.

\item There is a basic generator of the projective holonomy with
multiplier  $1$ or a root of the unity of order $p^s$ for some prime
$p$ and $s\in\mathbb{N}^*$.

\end{enumerate}

Then  $\fa$ admits a holomorphic first integral.
\end{theorem}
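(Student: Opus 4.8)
The plan is to reduce the existence of a first integral to the finiteness of the projective holonomy group $H(\fa)$ and then invoke the classical holonomy/first-integral dictionary. Since $\fa$ is non-dicritical, the punctual blow-up $\pi\colon \widetilde{\mathbb C^n}\to \mathbb C^n$ produces a foliation $\tilde\fa=\pi^*\fa$ leaving the exceptional divisor $E=\pi^{-1}(0)\cong\mathbb P^{n-1}$ invariant, with distinguished leaf $\tilde L=E\setminus C(\fa)$. By the discussion preceding the statement, hypothesis (1) together with Deligne's theorem shows that $H(\fa)\subset\Diff(\mathbb C,0)$ is an irreducible group in the sense of Definition~\ref{Definition:irreduciblegroup}: the basic holonomy generators attached to small loops around the irreducible codimension one component of $C(\fa)$ are pairwise conjugate in $H(\fa)$ and their composition is the identity. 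Here one uses $n\geq 3$, so that $\pi_1(E\setminus C(\fa))\cong\pi_1(E\setminus C_1(\fa))$ and the holonomy is controlled by the codimension one part $C_1(\fa)$.

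With irreducibility in hand, hypothesis (2) furnishes a basic generator whose multiplier is $1$ or a root of unity of order $p^s$ with $p$ prime and $s\in\mathbb N^*$. This is precisely the hypothesis of Theorem~\ref{Theorem:2irreducibledim1}, which I would apply directly to conclude that $H(\fa)$ is a \emph{finite} group. Being a finite subgroup of $\Diff(\mathbb C,0)$, it is analytically linearizable, hence conjugate to a cyclic group of rotations; in particular $z\mapsto z^{m}$ with $m=|H(\fa)|$ is an $H(\fa)$-invariant holomorphic first integral for the transverse dynamics.

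It then remains to promote this transverse first integral to a genuine first integral of $\fa$. Following the method of Mattei--Moussu and Cerveau--Mattei (\cite{Cerveau-Mattei}), the finite, linearizable holonomy of $\tilde L$ forces all leaves of $\tilde\fa$ near $\tilde L$ to be closed, and gluing the local transverse models $z\mapsto z^{m}$ along $\tilde L$ yields a holomorphic first integral $\tilde F$ for $\tilde\fa$ on a neighborhood of $E$ minus the codimension $\geq 2$ part of $C(\fa)$. Because $n\geq 3$, the locus that has been removed --- the codimension $\geq 2$ components of $C(\fa)$ --- has complex codimension $\geq 2$ in $\widetilde{\mathbb C^n}$, so by Hartogs' extension $\tilde F$ extends holomorphically to a full neighborhood of $E$. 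As $\tilde F$ is constant along the invariant divisor $E$, it is constant on $\pi^{-1}(0)$ and descends through $\pi$ to a holomorphic function $F$ near $0\in\mathbb C^n$ that is a first integral of $\fa$.

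The main obstacle I anticipate lies in this last paragraph: converting the purely group-theoretic finiteness of $H(\fa)$ into an honest holomorphic first integral requires the full strength of the Mattei--Moussu machinery --- the closing-up of the nearby leaves and the holomorphy of the resulting leaf-space projection --- together with a delicate control of $\tilde\fa$ along the codimension $\geq 2$ components of $C(\fa)$, where the holonomy description does not directly apply and where the Hartogs extension must be carried out. The hypothesis $n\geq 3$ is used exactly to make both the $\pi_1$-reduction of the first paragraph and this extension step valid.
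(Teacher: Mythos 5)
Your proposal follows essentially the same route as the paper: establish that $H(\fa)$ is an irreducible group via Deligne's theorem and hypothesis (1), apply Theorem~\ref{Theorem:2irreducibledim1} with hypothesis (2) to conclude that $H(\fa)$ is finite, and then invoke the Mattei--Moussu/Cerveau--Loray machinery to pass from finite projective holonomy to a holomorphic first integral. The paper simply delegates that last step to the proof of Theorem~0 in \cite{CL}, whereas you spell out the leaf-closure, gluing, and Hartogs extension arguments explicitly; the content is the same.
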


We observe that if $ C(\fa)$ is irreducible of codimension one and
degree $p^s$ for some prime $p$ and $s \in \mathbb N^*$ then
condition (2) above is automatically satisfied (\cite{CL}  pages
219-220).

Given a codimension one foliation germ $\fa$ at $0 \in \mathbb C^n,
n \geq 3$ we consider maps $\tau \colon (\mathbb C^2,0) \to (\mathbb
C^n,0)$ {\it in general position} with respect to $\fa$ in the sense
of \cite{Ma-Mo} (see also \cite{C-LN-S} and \cite{Cerveau-Mattei}).
We shall call the inverse image $\tau^*(\fa)$ of $\fa$ by such a map
$\tau$, a {\it generic plane section} of $\fa$. In \cite{CL} it is
observed that for a nondicritical foliation germ, the irreducibility
of the tangent cone implies that the generic plane sections admit a
reduction by one single blow-up. In this sense we can extend
Theorem~\ref{Theorem:firstintegral1} above as follows:

\begin{theorem}
\label{Theorem:firstintegralreducible} Let $\fa$ be a non-dicritic
germ of codimension one holomorphic  foliation with a singularity at
the origin $ 0 \in \mathbb C^n, n \geq 3$. Suppose that:

\begin{enumerate}

\item The codimension one component of the
tangent cone $ C(\fa)$ of $\fa$ is a normal crossings divisor.

\item Each codimension one irreducible component $\Lambda_j$ of $ C(\fa)$
has degree $p_j^{s_j}$ for a prime $p_j$ and $s_j \in \mathbb N\cup
\{0\}$.

\item A generic plane section $\tau^*(\fa)$ of $\fa$, can be reduced
with a single blow-up $\pi\colon \widetilde {\mathbb C^2} \to
\mathbb C^2$.

\end{enumerate}

Then  $\fa$ admits a holomorphic first integral.
\end{theorem}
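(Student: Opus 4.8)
The plan is to reduce the existence of a holomorphic first integral to the finiteness of the projective holonomy group $H(\fa)$, exactly as in the proof of Theorem~\ref{Theorem:firstintegral1}, and to establish this finiteness by combining the normal crossings hypothesis (which forces $H(\fa)$ to be abelian) with the prime--power degree hypothesis (which controls each basic generator). First I would blow up the origin, $\pi\colon \widetilde{\mathbb C^n}\to\mathbb C^n$; since $\fa$ is non-dicritic the exceptional divisor $E\cong\mathbb P^{n-1}$ is invariant and $\tilde L=E\setminus C(\fa)$ is a leaf whose holonomy is $H(\fa)\subset\Diff(\mathbb C,0)$. As recalled in the text, $\pi_1(E\setminus C_1(\fa))\cong\pi_1(\tilde L)$, so $H(\fa)$ is generated by the basic holonomy generators $h_1,\dots,h_r$ attached to the irreducible components $\Lambda_1,\dots,\Lambda_r$ of $C_1(\fa)$.

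Next I would exploit hypothesis (1). Since $C_1(\fa)$ is a normal crossings hypersurface, a Zariski--Lefschetz hyperplane argument reduces the computation of $\pi_1(\mathbb P^{n-1}\setminus C_1(\fa))$ to that of the complement of a nodal curve in a generic $\mathbb P^2$; by the Zariski--Fulton--Deligne theorem (\cite{Deligne,Fulton}) this group is abelian, with presentation $\mathbb Z^r/\langle(d_1,\dots,d_r)\rangle$ where $d_j=\deg\Lambda_j$. Consequently $H(\fa)$ is abelian, generated by $h_1,\dots,h_r$ subject to the single relation $h_1^{d_1}\cdots h_r^{d_r}=\id$. In particular the meridians attached to the $d_j$ intersection points of a generic line with $\Lambda_j$ are mutually conjugate and all represent $h_j$.

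The core step is to prove that $H(\fa)$ is finite. By (3) a generic plane section $\tau^*(\fa)$ becomes reduced after a single blow-up, and the exceptional line then meets $\Lambda_j$ in $d_j=p_j^{s_j}$ reduced singular points whose local holonomies are cyclically conjugate---by transitivity of the monodromy on the irreducible $\Lambda_j$---and all represent $h_j$ in the abelian group $H(\fa)$. Since at a generic point of $\Lambda_j$ the normal crossings hypothesis makes the remaining components invisible, the arithmetic computation of \cite{CL} (pages 219--220) used for Theorem~\ref{Theorem:firstintegral1} applies to this configuration and forces the multiplier of $h_j$ to be a root of unity of order $p_j^{s_j}$. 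I would then adapt the argument of Theorem~\ref{Theorem:2irreducibledim1} to the present abelian, multi-component setting to exclude nontrivial elements tangent to the identity; once $H(\fa)$ has no such flat elements and all its multipliers are roots of unity, the homomorphism $h\mapsto h'(0)$ identifies the finitely generated abelian group $H(\fa)$ with the finite subgroup $\langle\mu_1,\dots,\mu_r\rangle$ of $\mathbb C^*$ generated by the multipliers, so $H(\fa)$ is finite. (When some $s_j=0$ the corresponding $h_j$ has trivial multiplier and must itself be trivial, as in the remark following Theorem~\ref{Theorem:2irreducibledim1}.)

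Finally, with $H(\fa)$ finite I would recover the first integral as in Theorem~\ref{Theorem:firstintegral1}: finiteness of the holonomy of the reduced plane section produces, by the Mattei--Moussu theorem, a holomorphic first integral for $\tau^*(\fa)$, and the general position of $\tau$ together with $n\ge 3$ allows this to be propagated to a holomorphic first integral of $\fa$ on $(\mathbb C^n,0)$ (cf. \cite{Cerveau-Mattei,Ma-Mo} and \cite{malgrange}). I expect the main obstacle to be the exclusion of flat elements together with the isolation of each component: the sole global relation $h_1^{d_1}\cdots h_r^{d_r}=\id$ mixes all the components and does not by itself bound any individual $h_j$, so controlling each $h_j$ requires transporting the arithmetic of Theorem~\ref{Theorem:2irreducibledim1} and \cite{CL} from the irreducible case to the semilocal configuration attached to each normal crossings component---precisely the point at which hypotheses (2) and (3) must be used in tandem.
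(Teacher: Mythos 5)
Your overall strategy coincides with the paper's: blow up, reduce everything to the finiteness of the projective holonomy $H(\fa)$, and then conclude via the single-blow-up reduction of a generic plane section as in \cite{CL}. The final step and the use of Deligne's theorem to get abelianness are fine. The problem is that the central claim --- finiteness of $H(\fa)$ --- is never actually established, and you concede as much in your closing paragraph. Concretely: from the presentation $\pi_1(E\setminus C_1(\fa))\cong\mathbb Z^r/\langle(d_1,\dots,d_r)\rangle$ that you (correctly) write down, the only constraint on the generators is $h_1^{d_1}\circ\cdots\circ h_r^{d_r}=\id$. On multipliers this gives $\prod_j\mu_j^{d_j}=1$ and nothing more; it does not force $\mu_j^{d_j}=1$ for any individual $j$, so the assertion that ``the arithmetic computation of \cite{CL} forces the multiplier of $h_j$ to be a root of unity of order $p_j^{s_j}$'' is unsupported. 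Likewise ``adapt the argument of Theorem~\ref{Theorem:2irreducibledim1} to the abelian multi-component setting to exclude flat elements'' is precisely the statement to be proved, not an argument. Since $\mathbb Z^r/\langle(d_1,\dots,d_r)\rangle$ is infinite as soon as $r\ge2$, finiteness of the image cannot come from the group theory of $\pi_1$ alone; some additional input is required, and you never supply it.

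The paper's proof gets around this by asserting a stronger topological statement: that $\pi_1(L(\fa))$ decomposes as the \emph{direct product} of the groups $\pi_1(E\setminus\Lambda_j)$, each of which is cyclic of order $d_j$ by Deligne's theorem applied to the irreducible hypersurface $\Lambda_j$. This yields, for each $j$ \emph{separately}, the relation $h_{j,1}\circ\cdots\circ h_{j,d_j}=\id$ among the $d_j$ pairwise conjugate meridian holonomies attached to $\Lambda_j$; hence each $H_j$ is an irreducible group in the sense of Definition~\ref{Definition:irreduciblegroup} with $d_j=p_j^{s_j}$ basic generators, its common multiplier satisfies $\mu_j^{p_j^{s_j}}=1$, and Theorem~\ref{Theorem:2irreducibledim1} makes each $H_j$ finite, so $H(\fa)$ is a product of finite groups. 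To repair your argument you must either justify such a component-by-component relation (note that it is strictly stronger than the presentation you wrote, which only records the single sum $\sum_j d_j\gamma_j=0$) or find a genuinely different source of finiteness; as it stands the proposal stops exactly where the real work begins.
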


\section{Irreducible groups with an element tangent to the identity}
In this section we consider irreducible groups having a generator
with multiplier $1$, ie., of the form $f_k(z) = z + a_{k+1} z^{k+1}
+ \cdots$. Since any two generators are conjugate by hypothesis,
this implies that all elements in $G$ are tangent to the identity,
ie., $G$ is {\it flat}. We shall then start to prove
Theorem~\ref{Theorem:2irreducibledim1}.
\begin{proof}[Proof  of Theorem~\ref{Theorem:2irreducibledim1}: flat case]
There exists  a finite set of basic generators
$f_1,f_2,\ldots,f_{\nu +1}\in G$ such that:
\begin{enumerate}
\item[(a)] $f_1\circ f_2\circ\ldots\circ f_{\nu+1}=\id$.
\item[(b)] $f_i$ and $f_j$ are conjugate in $G$ for all $i,j$.
    \end{enumerate}
From (b) we have that all generators have the same multiplier.
Indeed, for $i\neq j$ there exists $g\in G $ of the form
$$g(z)=\alpha z+h.o.t.,\;\;\alpha\neq0$$
 such that $$f_i\circ g=g\circ f_j$$ hence we obtain
$$f_i^\prime(0)\cdot\alpha=f_j^\prime(0)\cdot\alpha$$ then
$$f_i^\prime(0)=f_j^\prime(0),\;\mbox{for all }i,j.$$
Since some generator is tangent to identity we have
$f^\prime_1(0)=\ldots=f^\prime_{\nu+1}(0)=1$. Then we have
$$f_i(z)=z+h.o.t.,\mbox{ for all }1\leq i\leq {\nu+1}.$$
 Consequently, every element $g\in G$ is of the form
$$g(z)=z+h.o.t.$$

In this case, we will show that
\begin{equation}\label{identity1}
f_1=f_2=\ldots=f_{\nu+1}=\id.
\end{equation}
Consequently $G$ is trivial.
The idea is to use Taylor series expansion and create an algorithm using derivation.
 \vglue.2in
Indeed, for $i\neq j$ by (b) there is $g\in G$ such that
\begin{equation}\label{dercomp01}
f_i\circ g=g\circ f_j.
\end{equation}

Derivating (\ref{dercomp01}) we have
\begin{equation}\label{dercomp11}
f_i^\prime(g)\cdot g^\prime=g^\prime(f_j)\cdot f_j^\prime.
\end{equation}
Also, derivating (\ref{dercomp11}) have
\begin{equation}\label{dercomp21}
f_i^{\prime\prime}(g)\cdot (g^\prime)^2+f_i^\prime(g)\cdot
g^{\prime\prime}=g^{\prime\prime}(f_j)\cdot
(f_j^\prime)^2+g^\prime(f_j)\cdot f_j^{\prime\prime}
\end{equation}
as $g(0)=f_i(0)=f_j(0)=0$ and
$g^\prime(0)=f^\prime_i(0)=f_j^\prime(0)=1$  we have in
(\ref{dercomp21})
$$f_i^{\prime\prime}(0)+g^{\prime\prime}(0)=g^{\prime\prime}(0)+f_j^{\prime\prime}(0)$$
therefore $$f^{\prime\prime}_i(0)=f_j^{\prime\prime}(0).$$
So there is $a_2=\dfrac{f^{\prime\prime}_i(0)}{2!}\in\co$ for all $i$ such that
$$f_i(z)=z+a_2z^2+h.o.t.,\mbox{ for all }1\leq i\leq \nu+1.$$
Then by (a) we have that $a_2=0$ therefore
$$f^{\prime\prime}_i(0)=f_j^{\prime\prime}(0)=0.$$

With the objective of obtaining a similar result for higher order
derivatives  we have to verify the following statement, variant of a
well-known result of Leibniz:

\begin{Claim}\label{Leibnitzgeral1}
For any  $\varphi$, $\psi$ diffeomorphisms and $n\in\na$, $n\geq3$ we have that
\begin{equation}\label{hipind1}
 (\varphi\circ\psi)^{(n)}=\varphi^{(n)}(\psi)\cdot (\psi^\prime)^n+\displaystyle\sum^{n-1}_{k=2}\varphi^{(k)}(\psi)\cdot R_k(\psi^\prime,\psi^{\prime\prime},\ldots,\psi^{(n+1-k)})+\varphi^\prime(\psi)\cdot \psi^{(n)}
 \end{equation}
where $R_k\in \mathbb C[x_1,...,x_{n+1-k}]$ are homogeneous polynomials of degree $k$ in $\co^{n+1-k}$ which has null coefficient in the monomial $x_1^k$.
\end{Claim}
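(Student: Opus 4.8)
The plan is to prove the Claim by induction on $n$, since this is a generalized Fa\`a di Bruno / Leibniz-type formula for the $n$-th derivative of a composition $\varphi\circ\psi$. The base case $n=3$ can be checked directly by differentiating the displayed formula (\ref{dercomp21}) once more; indeed, (\ref{dercomp21}) is already the case $n=2$ written out, and differentiating it yields the case $n=3$, from which one reads off the shape of the coefficients $R_k$. The substance of the Claim is not the mere existence of such an expansion (that is classical Fa\`a di Bruno), but the two structural assertions about the middle terms: that each $R_k$ is a homogeneous polynomial of degree $k$ in the variables $\psi',\psi'',\ldots,\psi^{(n+1-k)}$, and crucially that $R_k$ has vanishing coefficient on the pure monomial $x_1^k=(\psi')^k$. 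These are exactly the properties that will later let the authors kill the leading new term at each stage of their algorithm.

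First I would set up the inductive step: assume the formula (\ref{hipind1}) holds for some $n\geq 3$ and differentiate both sides once with respect to the variable. On the right-hand side each summand is a product, so I apply the ordinary product rule. The term $\varphi^{(n)}(\psi)\cdot(\psi')^n$ differentiates into $\varphi^{(n+1)}(\psi)\cdot(\psi')^{n+1}$ (the new top term) plus $\varphi^{(n)}(\psi)\cdot n(\psi')^{n-1}\psi''$, which must be absorbed into the new $R_n$. Likewise $\varphi'(\psi)\cdot\psi^{(n)}$ differentiates into $\varphi''(\psi)\psi'\psi^{(n)}$ plus $\varphi'(\psi)\psi^{(n+1)}$ (the new bottom term). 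Each middle summand $\varphi^{(k)}(\psi)\cdot R_k$ produces $\varphi^{(k+1)}(\psi)\cdot\psi' R_k$ and $\varphi^{(k)}(\psi)\cdot R_k'$, where $R_k'$ denotes the total derivative of $R_k$ treating each $\psi^{(j)}$ as depending on the variable. Collecting terms by the order of the derivative of $\varphi$ appearing, I obtain the coefficient of $\varphi^{(k)}(\psi)$ in the differentiated expression as
\begin{equation}\label{newRk}
\widetilde{R}_k = \psi'\cdot R_{k-1} + R_k',
\end{equation}
with the natural conventions at the endpoints, and this defines the new family of polynomials for index $n+1$.

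The remaining work is to verify that $\widetilde{R}_k$ inherits the two claimed properties. Homogeneity of degree $k$: by induction $R_{k-1}$ is homogeneous of degree $k-1$, so $\psi'\cdot R_{k-1}$ is homogeneous of degree $k$; and $R_k$ is homogeneous of degree $k$ in the weighted sense where one assigns weight $1$ to each variable $x_j=\psi^{(j)}$, so since differentiation with respect to the underlying variable sends $\psi^{(j)}$ to $\psi^{(j+1)}$ (raising the order index but preserving the count of factors) and acts by the chain rule, $R_k'$ remains homogeneous of the same degree $k$ in the $\psi$-derivatives. The main obstacle, and the point deserving the most care, is the vanishing of the $x_1^k$-coefficient of $\widetilde{R}_k$. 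In (\ref{newRk}), the term $\psi'\cdot R_{k-1}$ can only contribute to $x_1^k$ through $\psi'\cdot x_1^{k-1}$, but by the inductive hypothesis $R_{k-1}$ has no $x_1^{k-1}$ term, so this contribution vanishes. For $R_k'$, the pure power $(\psi')^k=x_1^k$ of $R_k$ would be the only source of an $x_1^k$ term after differentiation, but differentiating $(\psi')^k$ produces $k(\psi')^{k-1}\psi''$, which contains the factor $\psi''=x_2$ and hence is not a pure power of $x_1$; moreover by hypothesis $R_k$ has no $x_1^k$ term to begin with, so differentiating any of its monomials raises the order of some factor and cannot create a pure $x_1^k$. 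Hence $\widetilde{R}_k$ has null $x_1^k$-coefficient, completing the induction.
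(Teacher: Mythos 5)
Your proposal is correct and follows essentially the same route as the paper: induction on $n$ starting from the explicit case $n=3$, differentiating the formula once, and collecting terms by the order of the derivative of $\varphi$ to obtain the recursion $\widetilde{R}_k=\psi'\cdot R_{k-1}+R_k'$ (with the endpoint conventions), which is exactly the paper's definition of the new polynomials $\tilde R_l$. You actually spell out the homogeneity and the vanishing of the $x_1^k$-coefficient in more detail than the paper, which only asserts these properties as ``not difficult to see.''
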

\begin{proof}[Proof of Claim~\ref{Leibnitzgeral1}]
In fact, let us show this by induction on $n$, for $n=3$ we have
$$(\varphi\circ\psi)^{(3)}=\varphi^{\prime\prime\prime}(\psi)\cdot (\psi^\prime)^3+3\varphi^{\prime\prime}(\psi)\cdot \psi^\prime \cdot \psi^{\prime\prime}+\varphi^\prime(\psi)\cdot \psi^{\prime\prime\prime}$$
therefore equality is valid. Let us assume that equality is
satisfied  for $n$ by showing that it is valid for $n+1$. By the
hypothesis of induction (\ref{hipind1}) is valid. Derivating
(\ref{hipind1}) we have
\begin{equation}\label{derhipind1}
\begin{array}{c}
(\varphi\circ\psi)^{(n+1)}=\varphi^{(n+1)}(\psi)\cdot
(\psi^\prime)^{n+1}+n\varphi^{(n)}(\psi)\cdot (\psi^\prime)^{n-1}
\cdot \psi^{\prime\prime}+\\
\\\displaystyle\sum^{n-1}_{k=2}\left[\varphi^{(k+1)}(\psi)\cdot\psi^\prime\cdot
R_k(\psi^\prime,\psi^{\prime\prime},\ldots,\psi^{(n+1-k)})+\varphi^{(k)}(\psi)\cdot
\left(R_k(\psi^\prime,\psi^{\prime\prime},\ldots,\psi^{(n+1-k)})\right)^\prime
\right]\\ \\+\varphi^{\prime\prime}(\psi)\cdot \psi^\prime\cdot
\psi^{(n)}+\varphi^\prime(\psi)\cdot \psi^{(n+1)}.
\end{array}
\end{equation}
It is not difficult to see that
$\left(R_k(\psi^\prime,\psi^{\prime\prime},
\ldots,\psi^{(n+1-k)})\right)^\prime$ is a homogeneous polynomial
degree  $k$ in $\co^{n+2-k}$ which has null coefficient in the
monomial $x_1^k$. Denoting
$$\tilde{R}_n(\psi^\prime,\psi^{\prime\prime})=n(\psi^\prime)^{n-1}\cdot
\psi^{\prime\prime}+\psi^\prime\cdot R_{n-1}(\psi^\prime,\psi^{\prime\prime})$$

$$\tilde{R}_l(\psi^\prime,\psi^{\prime\prime},\ldots,\psi^{(n+2-l)})=\psi^\prime\cdot R_{l-1}(\psi^\prime,\psi^{\prime\prime},\ldots,\psi^{(n+2-l)})+\left(R_{l}(\psi^\prime,\psi^{\prime\prime},\ldots,\psi^{(n+1-l)})\right)^\prime,\;\mbox{ for }2<l<n.$$
and
$$ \tilde{R}_2(\psi^\prime,\psi^{\prime\prime},\ldots,\psi^{(n)})=\psi^\prime\cdot\psi^{(n)}+\left(R_{2}(\psi^\prime,\psi^{\prime\prime},\ldots,\psi^{(n-1)})\right)^\prime$$

we can re-write (\ref{derhipind1}) as
$$ (\varphi\circ\psi)^{(n+1)}=\varphi^{(n+1)}(\psi)\cdot
(\psi^\prime)^{n+1}+\displaystyle\sum^{n}_{l=2}\varphi^{(l)}(\psi)
\cdot
\tilde{R}_l(\psi^\prime,\psi^{\prime\prime},\ldots,\psi^{(n+2-l)})+
\varphi^\prime(\psi)\cdot \psi^{(n+1)}
 $$
where $\tilde{R}_l$ are homogeneous polynomials of degree $l$ in
$\co^{n + 2-l}$  which has null coefficient in the monomial $x_1^l$.
This proves Claim~\ref{Leibnitzgeral1}.
\end{proof}

Our next step is:
\begin{Claim}\label{null1}
\begin{equation}\label{iterednull1}
f_i^{(n)}(0)=f_j^{(n)}(0)=0
\end{equation}
 for all $n\in\mathbb{N}$, $n\geq3$.
 \end{Claim}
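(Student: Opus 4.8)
The plan is to prove Claim~\ref{null1} by induction on $n\geq 3$, taking as inductive hypothesis that $f_i^{(m)}(0)=0$ for every index $i$ and every $m$ with $2\leq m\leq n-1$; the base case $n=3$ is exactly the vanishing $f_i''(0)=0$ already established before the claims. The engine of the induction is to feed the conjugacy relation (\ref{dercomp01}), namely $f_i\circ g=g\circ f_j$ for a suitable $g\in G$, into the generalized Leibniz formula (\ref{hipind1}) of Claim~\ref{Leibnitzgeral1}, differentiating $n$ times and then evaluating at the origin. Since every element of $G$ is tangent to the identity, we have $g(0)=f_i(0)=f_j(0)=0$ and $g^\prime(0)=f_i^\prime(0)=f_j^\prime(0)=1$, which is precisely what makes the evaluation at $0$ collapse to something manageable.

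Applying (\ref{hipind1}) with $(\varphi,\psi)=(f_i,g)$ to the left-hand side, the leading term $f_i^{(n)}(g)\cdot(g^\prime)^n$ becomes $f_i^{(n)}(0)$ at the origin, the trailing term $f_i^\prime(g)\cdot g^{(n)}$ becomes $g^{(n)}(0)$, and each middle term carries a factor $f_i^{(k)}(g(0))=f_i^{(k)}(0)$ with $2\leq k\leq n-1$, which vanishes by the inductive hypothesis; thus $(f_i\circ g)^{(n)}(0)=f_i^{(n)}(0)+g^{(n)}(0)$. Applying (\ref{hipind1}) with $(\varphi,\psi)=(g,f_j)$ to the right-hand side similarly produces $g^{(n)}(0)+f_j^{(n)}(0)$ from the leading and trailing terms, together with a middle sum $\sum_{k=2}^{n-1}g^{(k)}(0)\cdot R_k(f_j^\prime(0),\ldots,f_j^{(n+1-k)}(0))$, which must be shown to vanish. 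Equating the two sides and cancelling $g^{(n)}(0)$ then gives $f_i^{(n)}(0)=f_j^{(n)}(0)$ for all $i,j$.

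I expect the vanishing of this last sum to be the crux, and it is precisely here that the special structure recorded in Claim~\ref{Leibnitzgeral1} pays off. For $2\leq k\leq n-1$ the arguments of $R_k$ are $f_j^\prime(0),f_j^{\prime\prime}(0),\ldots,f_j^{(n+1-k)}(0)$ with $n+1-k\leq n-1$, so by the inductive hypothesis every argument except the first equals $0$, while $f_j^\prime(0)=1$. Because $R_k$ is homogeneous of degree $k$ with no $x_1^k$ monomial, every one of its monomials contains at least one variable $x_m$ with $m\geq2$, all of which are being set to $0$; hence $R_k(1,0,\ldots,0)=0$ and the entire middle sum disappears. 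This yields the common value $f_i^{(n)}(0)=f_j^{(n)}(0)$, which I write as $n!\,a_n$, so that $f_i(z)=z+a_nz^n+\cdots$ for every $i$.

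Finally I would invoke relation (a), $f_1\circ\cdots\circ f_{\nu+1}=\id$. For maps tangent to the identity of the shape $z+a_nz^n+\cdots$ sharing the same first possibly nonzero coefficient, the coefficients of $z^n$ add under composition, so $f_1\circ\cdots\circ f_{\nu+1}(z)=z+(\nu+1)a_nz^n+\cdots$; comparing with the identity forces $(\nu+1)a_n=0$, whence $a_n=0$ and $f_i^{(n)}(0)=0$ for all $i$. This closes the induction and proves Claim~\ref{null1}. Iterating over all $n$ then gives $f_1=\cdots=f_{\nu+1}=\id$, which is (\ref{identity1}), so $G$ is trivial, completing the flat case of Theorem~\ref{Theorem:2irreducibledim1}.
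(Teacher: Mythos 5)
Your proposal is correct and follows essentially the same route as the paper: induction on $n$, feeding the conjugacy $f_i\circ g=g\circ f_j$ into the generalized Leibniz formula of Claim~\ref{Leibnitzgeral1}, killing the middle terms via the inductive hypothesis and the fact that each $R_k$ has no $x_1^k$ monomial (so $R_k(1,0,\ldots,0)=0$), and then using relation (a) and the additivity of the first nontrivial Taylor coefficient under composition to force $a_n=0$. The only cosmetic difference is that the paper treats $n=3$ by an explicit third-derivative computation while you absorb it into the general inductive step starting from $f_i''(0)=0$.
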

 \begin{proof}[Proof of Claim~\ref{null1}]
 As before, we shall use induction on $n$ to prove the claim.
 For $n=3$, derivating (\ref{dercomp21}) we have to
\begin{equation}\label{dercomp3p1}
f_i^{\prime\prime\prime}(g)\cdot (g^\prime)^3+3f_i^{\prime\prime}(g)
\cdot g^\prime\cdot g^{\prime\prime}+f_i^\prime(g)\cdot
g^{\prime\prime\prime}=g^{\prime\prime\prime}(f_j)\cdot
(f_j^\prime)^3+ 3g^{\prime\prime}(f_j)\cdot f^\prime_j\cdot
f^{\prime\prime}_j+g^\prime(f_j)\cdot f^{\prime\prime\prime}_j
\end{equation}
as $g(0)=f_i(0)=f_j(0)=0$,
$g^\prime(0)=f^\prime_i(0)=f_j^\prime(0)=1$  and
$f^{\prime\prime}_i(0)=f_j^{\prime\prime}(0)=0$ we have in
(\ref{dercomp3p1})
$$f_i^{\prime\prime\prime}(0)+g^{\prime\prime\prime}(0)=g^{\prime\prime\prime}(0)+f_j^{\prime\prime\prime}(0)$$therefore $$f^{\prime\prime\prime}_i(0)=f_j^{\prime\prime\prime}(0)$$
hence there is $a_3=\dfrac{f^{\prime\prime\prime}_i(0)}{3!}\in\co$
for all $i$  such that
$$f_i(z)=z+a_3z^3+h.o.t.,\mbox{ for all }1\leq i\leq \nu+1.$$
Then by (a) we have that $a_3=0$ therefore
$$f^{\prime\prime\prime}_i(0)=f_j^{\prime\prime\prime}(0)=0.$$
Suppose the statement is satisfied for $ 3\leq l <n$ by showing that
it is valid for $n$. Now derivating $n$-times (\ref{dercomp01}) and
using Claim~\ref{Leibnitzgeral1} we have to
\begin{equation}\label{dercompnp1}
\begin{array}{c}
f_i^{(n)}(g)\cdot (g^\prime)^n+\displaystyle\sum^{n-1}_{l=2}f_i^{(l)}(g)\cdot R_l(g^\prime,g^{\prime\prime},\ldots,g^{(n+1-l)})+f_i^\prime(g)\cdot g^{(n)}\\
\\
=g^{(n)}(f_j)\cdot (f_j^\prime)^n+\displaystyle\sum^{n-1}_{l=2}g^{(l)}(f_j)\cdot \tilde{R}_l(f_j^\prime,f_j^{\prime\prime},\ldots,f_j^{(n+1-l)})+g^\prime(f_j)\cdot f_j^{(n)}
\end{array}
\end{equation}
where $R_l$ and $\tilde{R}_l$ are homogeneous polynomials  of degree
$l$ in $\co^{n+1-l}$ which has null coefficient in the monomial
$x_1^l$. By the induction hypothesis (\ref{iterednull1}) it is valid
for all $3\leq l <n$ so
 $$f^{(l)}_i(0)=f_j^{(l)}(0)=0,\;\mbox{for all }3\leq l<n,$$
 $g(0)=f_i(0)=f_j(0)=0$, $g^\prime(0)=f^\prime_i(0)=f_j^\prime(0)=1$
 and $f^{\prime\prime}_i(0)=f_j^{\prime\prime}(0)=0$ we have in (\ref{dercompnp1})
 $$f_i^{(n)}(0)+g^{(n)}(0)=g^{(n)}(0)+f_j^{(n)}(0)$$
 therefore $$f^{(n)}_i(0)=f_j^{(n)}(0)$$hence there is
 $a_n=\dfrac{f^{(n)}_i(0)}{n!}\in\co$ for all $i$ such that
$$f_i(z)=z+a_nz^n+\ldots,\mbox{ for all }1\leq i\leq \nu+1.$$
So by (a) we have that $a_n=0$ therefore
$$f^{(n)}_i(0)=f_j^{(n)}(0)=0.$$ This ends the proof of Claim~\ref{null1}.
\end{proof}
By Claim~\ref{null1} we have equality (\ref{identity1}) and
therefore $G=\{id\}$.  This proves
Theorem~\ref{Theorem:2irreducibledim1} in  case some generator is
tangent to the identity.
\end{proof}

\section{Irreducible groups with a primitive root of the unity}
Recall that if $f_1,...,f_{\nu+1}$ is a set of generators of an
irreducible group $G$ as in
definition~\ref{Definition:irreduciblegroup} then we have
$f_k^\prime(0)=\mu$ for all $k$, with $\mu ^{\nu+1}=1$. Thus the
multiplier of any generator is a root of the unity. Its order is a
divisor of $\nu+1$ but it is not necessarily of the form $p^s$ for a
prime $p$ (see examples~\ref{Contraexample1} and
\ref{Example:more}). In Theorem~\ref{Theorem:2irreducibledim1} we
assume that $\mu$ has order $p^s$. The proof of this theorem goes
partially as the one given in \cite{CL} for
Theorem~\ref{Theorem:CL}. Nevertheless, we shall make use of our
preceding case in Theorem~\ref{Theorem:2irreducibledim1} and of some
techniques borrowed from \cite{CL}.

\begin{proof}[Proof  of Theorem~\ref{Theorem:2irreducibledim1}: remaining case]

Suppose that the multiplier $\mu\neq 1$ of some (and therefore of
each)  basic generator is a  root of the unit of order $p^s$ for
some prime $p$ and some $s \in \mathbb N^*$. By the above remark
$p^s$ divides $\nu+1$. We will perform an adaptation of the
arguments in the proof of \cite[Theorem 1]{CL} in order to conclude
that the group is finite. The idea is to show by a formal algorithm
that $G$ is formally linearizable.  More precisely, let us prove
that:

\begin{Claim} If the group $G$ is linear (by a conjugation) to order $k$, that is,
$$f_i(z)=\mu z+t_iz^{k+1}+h.o.t,\;\;\mbox{where }t_i\in\co$$ for all $i$
then either the $t_i=0$ for all $i$ (and the group is  linearized to
order $k+1$) or $\nu+1$ does not divide $k$ and
$t_1=t_2=\ldots=t_{\nu+1}$.
\end{Claim}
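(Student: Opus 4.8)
The plan is to mimic the flat case but now tracking the first nontrivial coefficient $t_i$ in the expansion $f_i(z)=\mu z+t_iz^{k+1}+\cdots$, where $\mu$ is the common multiplier (a primitive $p^s$-th root of unity) and the group has already been conjugated to be linear up to order $k$. The two facts I would extract are: first, that conjugacy of the $f_i$ in $G$ forces a relation among the $t_i$; second, that the composition relation $f_1\circ\cdots\circ f_{\nu+1}=\id$ from condition (a) forces an additive/summation constraint on the $t_i$. Combining these should yield the stated dichotomy.

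First I would carry out the conjugacy step. Given $i\neq j$, take $g\in G$ with $f_i\circ g=g\circ f_j$, and write $g(z)=\alpha z+\cdots$. Since every element of $G$ has multiplier $\mu$ (established in the flat-case discussion and recalled at the top of this section), I would expand both sides to order $k+1$ and compare the coefficient of $z^{k+1}$. The linear parts match automatically; the order-$(k+1)$ comparison should give a relation of the shape $t_i\,\alpha^{k+1}\mu = \mu^{k+1}\,t_j\,\alpha + (\text{terms forcing }\alpha^{k+1}=\alpha)$, so that when $\mu^{k}\neq 1$ one concludes $t_i=\mu^{\,c}t_j$ for a suitable power, whereas when $\mu^k=1$ — equivalently $p^s\mid k$, hence $\nu+1\mid k$ since $p^s\mid \nu+1$ — the coefficient $\alpha$ drops out and one simply gets $t_i=t_j$. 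This is exactly where the divisibility condition ``$\nu+1$ does not divide $k$'' enters: if $\nu+1\mid k$ then $\mu^k=1$ and the conjugacy relation degenerates into $t_1=\cdots=t_{\nu+1}$, the second alternative of the claim.

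Next I would use condition (a). Composing $f_1\circ\cdots\circ f_{\nu+1}$ and again expanding to order $k+1$, the multipliers multiply to $\mu^{\nu+1}=1$ (so the composite is tangent to the identity, consistent with $\id$), and the order-$(k+1)$ coefficient of the composite is a weighted sum $\sum_{i} \mu^{\,e_i}\,t_i$ for explicit exponents $e_i$ coming from the chain rule through the linear parts. Setting this equal to the corresponding coefficient of $\id$ (namely $0$) produces a linear constraint on the $t_i$. In the non-divisible case $\nu+1\nmid k$, I would feed in the conjugacy relations $t_i=\mu^{c_i}t_j$ from the previous step and check that the constraint becomes a geometric-type sum $\bigl(\sum_i \mu^{m_i}\bigr)t_1=0$ whose coefficient is a sum over a full set of $(\nu+1)$-th roots, hence either vanishes identically (forcing nothing, and we land in the $t_1=\cdots=t_{\nu+1}$ alternative) or is nonzero (forcing all $t_i=0$, so the group linearizes to order $k+1$). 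In the divisible case, $\mu^k=1$ makes all the weights collapse to $1$, the sum becomes $(\nu+1)t_1$, and since we are in the alternative where all $t_i$ are equal this just gives $t_1=\cdots=t_{\nu+1}$ without forcing them to vanish.

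The main obstacle I anticipate is bookkeeping the exponents of $\mu$ correctly through the chain rule in the composition step, and then evaluating the resulting root-of-unity sum to decide exactly when its coefficient is zero. This is where the hypothesis that the order of $\mu$ is a prime power $p^s$ is doing real arithmetic work — much as in \cite{CL} — since one needs that the relevant character sum over $(\nu+1)$-th roots of unity fails to vanish precisely when $\nu+1\nmid k$; the prime-power structure controls which cyclotomic cancellations can occur. I would handle this by reducing everything to powers of the single primitive root $\mu$ and invoking the standard fact that $\sum_{r=0}^{N-1}\zeta^{r}=0$ for a nontrivial $N$-th root $\zeta$ and $=N$ otherwise, with $N$ the order of $\mu$. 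Once the coefficient's vanishing is pinned down, the dichotomy in the claim follows immediately, and iterating the claim over all $k$ gives formal linearizability, hence (by convergence of the formal conjugacy for finite-order linear parts) the finiteness of $G$.
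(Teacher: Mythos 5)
Your treatment of the divisible case is, up to packaging, the paper's argument: writing the conjugator as $g(z)=\alpha z+\beta z^{k+1}+\cdots$, the relation $f_i\circ g=g\circ f_j$ gives, in the coefficient of $z^{k+1}$,
\[
\mu\beta+t_i\alpha^{k+1}=\alpha t_j+\beta\mu^{k+1},
\]
and when $\mu^k=1$ (so $\mu^{k+1}=\mu$ and $\alpha^{k+1}=\alpha$, $\alpha$ being a power of $\mu$) the $\beta$-terms cancel and $t_i=t_j$; feeding this into the $z^{k+1}$-coefficient of condition (a) gives $(\nu+1)t_1=0$. But note that over $\co$ this \emph{does} force $t_1=\cdots=t_{\nu+1}=0$, whereas you conclude the opposite (``without forcing them to vanish''); you have the two alternatives of the dichotomy swapped. (The paper phrases this case as an additive homomorphism $az+bz^{k+1}+\cdots\mapsto b/a$, which is the same computation.)

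The genuine gap is the case $\mu^k\neq1$. In the displayed relation the term $\beta(\mu^{k+1}-\mu)=\beta\mu(\mu^k-1)$ does not vanish, and $\beta$ is the unknown order-$(k+1)$ coefficient of the conjugator, which varies with the pair $(i,j)$. So a pairwise coefficient comparison does \emph{not} yield $t_i=\mu^{c}t_j$, and the root-of-unity sum you plan to build on top of it cannot be set up. This is exactly the point where the paper does something structurally different: it maps the whole group homomorphically into the affine group by $\psi(az+bz^{k+1}+\cdots)=\frac{az+b}{a^{k+1}}$, so that the generators become $g_i(z)=\frac{\mu z+t_i}{\mu^{k+1}}$, pairwise conjugate in the image, and then invokes the lemma of \cite{CL} on affine groups with pairwise conjugate generators. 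That lemma --- not the geometric sum coming from condition (a) --- is where the prime-power hypothesis does its arithmetic work: if the order of the linear part had two distinct prime divisors, pairwise conjugacy of the affine generators would no longer force the translation parts to coincide, which is precisely the mechanism behind Examples~\ref{Contraexample1} and~\ref{Example:more}. A smaller slip: $\mu^k=1$ is equivalent to $p^s\mid k$, which does not imply $\nu+1\mid k$ (take $p^s=2$, $\nu+1=6$, $k=2$), so your identification of ``$\mu^k=1$'' with ``$\nu+1\mid k$'' is also incorrect.
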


In the latter case, we can linearize $G$ to order $k+1$ by
conjugating by
$$z\mapsto z+\dfrac{t_1}{\mu-\mu^{k+1}}z^{k+1}.$$
This produces a convergent algorithm in the Krull  topology
producing a complex conjugate diffeomorphism $G$ to the group
generated by the rational rotation $R(z)=\mu z$.
\begin{proof}[proof of the claim]
Suppose that $\nu+1$ divides $k$, we define the  following
application

$$\begin{array}{c c c c} \varphi:&G&\to& \co \\
&az+bz^{k+1}+h.o.t.&\mapsto&\dfrac{b}{a}  \end{array}$$

which defines a morphism from  $(G,\circ)$ into $(\co,+)$: indeed
given $f,g\in G$ we write
$$f(z)=az+bz^{k+1}+h.o.t.\;\;\mbox{and}\;\;g(z)=cz+dz^{k+1}+h.o.t.$$
and how $f\circ g\in G$ is written
$$f\circ g(z)=acz+(ad+bc^{k+1})z^{k+1}+h.o.t.$$
so we have
$$\varphi(f\circ g)=\dfrac{ad+bc^{k+1}}{ac}.$$
Since $\nu+1$ divides $k$ and $c=\mu^n$ for some integer $n$  then
we have $c^{k+1}=c$ therefore
$$\varphi(f\circ g)=\dfrac{ad+bc}{ac}=\dfrac{b}{a}+\dfrac{d}{c}=\varphi(f)+\varphi(g).$$
Now from (a) we have
$$\begin{array}{c} \varphi(f_1\circ f_2\circ\ldots\circ f_{\nu+1})=\varphi(\id)=0\\ \\
\varphi(f_1)+\varphi(f_2)+\ldots+\varphi(f_{\nu+1})=0\\ \\
\dfrac{t_1}{\mu}+\dfrac{t_2}{\mu}+\ldots+\dfrac{t_{\nu+1}}{\mu}=0.
  \end{array}$$
 Now for $i\neq j$ by (b) we have $h\in G$ such that
$$f_i\circ h=h\circ f_j$$
applying the morphism we have

$$\begin{array}{c} \varphi(f_i\circ h)=\varphi(h\circ f_j)\\ \\
\varphi(f_i)+\varphi(h)=\varphi(h)+\varphi(f_j)\\ \\
\varphi(f_i)=\varphi(f_j)\\
\\
\dfrac{t_i}{\mu}=\dfrac{t_j}{\mu}
\\
\\
t_i=t_j.
  \end{array}$$
  Therefore $t_1=t_2=\ldots=t_{\nu+1}=0$.

Suppose that $\nu+1$ does  not divide $k$, we define the following
application $\psi\colon G\to \Aff(\co), \,
\psi(az+bz^{k+1}+h.o.t.)=\dfrac{az+b}{a^{k+1}}$, which defines a
morphism from  $(G,\circ)$ into $(\Aff(\co),\circ)$: indeed given
$f,g\in G$ we write
$$f(z)=az+bz^{k+1}+h.o.t.\;\;\mbox{and}\;\;g(z)=cz+dz^{k+1}+h.o.t.$$
and since $f\circ g\in G$ is written as
$$f\circ g(z)=acz+(ad+bc^{k+1})z^{k+1}+h.o.t.$$
so we have
$$\psi(f\circ g)=\dfrac{acz+(ad+bc^{k+1})}{(ac)^{k+1}}$$
on the other hand
$$\psi(f)\circ\psi(g)=\dfrac{a\left(\dfrac{cz+d}{c^{k+1}}\right)+b}{a^{k+1}}=
\dfrac{acz+(a+bc^{k+1})}{(ac)^{k+1}}=\psi(f\circ g).$$

Denote by $G_0$ the image of $G$ by $\psi$. Therefore $G_0$  is an
affine group generated by the transformations
$g_1,g_2,\ldots,g_{\nu+1}$, pairwise  conjugate in $G_0$, where
$$g_i(z)=\dfrac{\mu z+t_i}{\mu^{k+1}}.$$

We now apply  the following lemma whose proof is found in \cite{CL}
(page 222):

\begin{Lemma} Let $\eta$ be  a $l$-th root of the unit, $l>1$,
 $\beta_1,\beta_2,\ldots,\beta_{r+1}\in\co$ and $\Gamma$
 an affine group generated by the transformations
  $h_i(z)=\eta z+\beta_i$, $i=1,2,\ldots,r+1$.
Then the $h_i$'s are pairwise conjugate in $\Gamma$ if and  only if
either $l$ has two distinct prime divisors or $l=q^m$, for some
prime $q$ and some  $m\in\mathbb{N}^*$ and
$\beta_1=\beta_2=\ldots=\beta_{r+1}$.
\end{Lemma}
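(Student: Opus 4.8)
The plan is to work inside $\Aff(\co)=\co^*\ltimes\co$ and to translate the pairwise-conjugacy condition into a purely arithmetic statement about the ring $\zz[\eta]$, where I take $l$ to be the multiplicative order of $\eta$. First I would record the basic conjugation formula: for $g(z)=\alpha z+\gamma$ and $h(z)=\eta z+\beta$ one computes $g\circ h\circ g^{-1}(z)=\eta z+(\alpha\beta+(1-\eta)\gamma)$. In particular conjugation preserves the linear part $\eta$, and a conjugacy sending $h_i$ to $h_j$ by an element $g$ of linear part $\eta^n$ amounts to the single scalar equation $\eta^n\beta_i+(1-\eta)\gamma=\beta_j$.

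Next I would identify the subgroup $N\subset\Gamma$ of pure translations $z\mapsto z+t$. Since $h_i\circ h_j^{-1}$ is translation by $\beta_i-\beta_j$, and conjugating $z\mapsto z+t$ by an element of linear part $\eta^n$ yields translation by $\eta^n t$, the set $M:=\{t:(z\mapsto z+t)\in N\}$ is exactly the $\zz[\eta]$-submodule of $(\co,+)$ generated by the differences $\beta_i-\beta_j$; a short bookkeeping with the multiplication rule in $\Aff(\co)$ shows there are no further translations. Writing a general element of $\Gamma$ of linear part $\eta^n$ as a translation in $M$ composed with $h_1^n$ and substituting into the conjugacy equation, the condition that $h_i$ and $h_j$ be conjugate in $\Gamma$ reduces, after simplification, to $\beta_i-\beta_j\in(1-\eta)M$. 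Here one uses that $\eta\equiv1\pmod{(1-\eta)\zz[\eta]}$, so multiplication by any power of $\eta$ acts trivially modulo $(1-\eta)M$ and the exponent $n$ drops out. Since conjugacy is an equivalence relation it suffices to compare each $h_i$ with $h_1$, and as the $\beta_i-\beta_1$ generate $M$, pairwise conjugacy is equivalent to the single module identity $M=(1-\eta)M$.

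It then remains to decide when $M=(1-\eta)M$, and the decisive arithmetic input is that $1-\eta$ is a unit of $\zz[\eta]=\zz[\zeta_l]$ if and only if $l$ is not a prime power: the norm of $1-\eta$ equals $\Phi_l(1)$, and $\Phi_l(1)=p$ when $l=p^m$ while $\Phi_l(1)=1$ when $l$ has at least two distinct prime divisors. If $l$ has two distinct prime divisors then $1-\eta$ is a unit, so $M=(1-\eta)M$ automatically and the $h_i$ are pairwise conjugate for arbitrary $\beta_i$. If instead $l=q^m$, then $(1-\eta)$ generates a proper ideal, and I would invoke Nakayama's lemma: for the finitely generated $\zz[\eta]$-module $M$ the equality $(1-\eta)M=M$ forces $xM=0$ for some $x\equiv1\pmod{(1-\eta)}$, and such an $x$ is nonzero because $1\notin(1-\eta)\zz[\eta]$; since $M\subset\co$ is torsion-free over the domain $\zz[\eta]$ this yields $M=0$, i.e. $\beta_1=\cdots=\beta_{r+1}$. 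Conversely $M=0$ trivially satisfies $M=(1-\eta)M$. Combining the two cases gives exactly the stated dichotomy.

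I expect the main obstacle to be the arithmetic heart of the last paragraph: proving that $1-\eta$ is a unit precisely when $l$ is divisible by two distinct primes (through the identification of $\Phi_l(1)$ with the norm of $1-\eta$) and running the Nakayama argument in the prime-power case to convert $M=(1-\eta)M$ into $M=0$. The earlier reduction to $M=(1-\eta)M$ is essentially formal, but it is worth being careful that $M$ is genuinely the full translation module of $\Gamma$ and that the linear part $\eta^n$ of the conjugating element really disappears modulo $(1-\eta)M$.
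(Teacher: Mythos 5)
Your argument is correct and complete. Note first that the paper does not actually prove this lemma: it is quoted verbatim from \cite{CL} (page 222), so there is no in-paper proof to compare against. Your reduction is sound: the conjugation formula $g\circ h_i\circ g^{-1}(z)=\eta z+(\alpha\beta_i+(1-\eta)\gamma)$ is right, the translation subgroup of $\Gamma$ does have amounts exactly the $\zz[\eta]$-module $M$ generated by the $\beta_i-\beta_j$ (your bookkeeping works because $1+\eta+\cdots+\eta^{l-1}=0$, so $h_1^l=\id$ and changing one letter of a word alters its translation part by an element of $M$), and the exponent of the conjugator genuinely disappears since $1-\eta^n=(1-\eta)(1+\cdots+\eta^{n-1})$, giving the clean criterion $M=(1-\eta)M$. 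The arithmetic heart --- $N(1-\eta)=\Phi_l(1)$, which is $1$ when $l$ has two distinct prime factors and $q$ when $l=q^m$ --- is exactly the ``arithm\'etique \'el\'ementaire'' that Cerveau and Loray exploit, so in spirit your proof follows the same route as the cited source; the module-theoretic packaging via Nakayama (valid here: $M$ is finitely generated, $x\equiv 1\pmod{(1-\eta)}$ is nonzero since $(1-\eta)$ is proper, and $M\subset\co$ is torsion-free) is a tidier organization than the hands-on computation in \cite{CL}, at the cost of invoking commutative algebra where an elementary induction would also do. One caveat worth recording: the statement as printed says only that $\eta$ is an $l$-th root of unity, but your proof (correctly) requires $l$ to be the exact multiplicative order of $\eta$; without that reading the lemma is false (e.g.\ $\eta$ of order $2$ is also a $6$-th root of unity, yet the translations $\beta_i$ need not then be unconstrained), so your interpretation is the only one under which the result holds and should be stated explicitly.
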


Taking $\eta=\dfrac{1}{\mu^k}$, $l=p^s$, $r=\nu$  and
$\beta_i=\dfrac{t_i}{\mu^{k+1}}$ ($i=1,...,\nu+1$) by the previous
lemma we have $p^s=q^m$, $q$ prime, $m\in\mathbb{N}^*$ and
$$\dfrac{t_1}{\mu^{k+1}}=\dfrac{t_2}{\mu^{k+1}}=\ldots=\dfrac{t_{\nu+1}}{\mu^{k+1}}.$$
Therefore $q=p$, $m=s$ and
$$t_1=t_2=\ldots=t_{\nu+1}.$$
This proves the claim. \end{proof}

The proof of Theorem~\ref{Theorem:2irreducibledim1} is now complete.
\end{proof}

As for the case of groups of germs of real analytic  diffeomorphisms
we promptly obtain:

\begin{Corollary}
Let $f_1,f_2,\ldots,f_{\nu +1}\in \Diff^w(\rr,0)$ be germs of real
analytic diffeomorphisms and let $G\subset \Diff^w(\rr,0)$ the group
generated by them. Assume that:
\begin{enumerate}
\item[(a)] $f_1\circ f_2\circ\ldots\circ f_{\nu+1}=\id$.
\item[(b)] $f_i$ and $f_j$ are conjugate in $G$ for all $i,j$.
\item[(c)] Some $f_j$ has a multiplier of order $p^s$ for
some prime $p$ and $s \in \mathbb N$.
    \end{enumerate}
    Then $G$ is finite of order $\leq 2$. If $s=0$ then $G$ is trivial.
\end{Corollary}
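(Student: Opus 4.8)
The plan is to reduce the real-analytic statement to the complex Theorem~\ref{Theorem:2irreducibledim1} by complexification. Each germ $f\in\Diff^w(\rr,0)$ extends uniquely to a germ $\tilde f\in\Diff(\co,0)$ with the same Taylor coefficients, and the assignment $f\mapsto\tilde f$ is an injective group homomorphism $\Diff^w(\rr,0)\hookrightarrow\Diff(\co,0)$ that preserves the multiplier. Applying this to the data, the images $\tilde f_1,\ldots,\tilde f_{\nu+1}$ generate a subgroup $\tilde G\subset\Diff(\co,0)$ which is precisely the homomorphic image of $G$. Since the elements of $G$ realizing the pairwise conjugacies in (b) have complexifications lying in $\tilde G$, conditions (a) and (b) pass verbatim to $\tilde G$, so $\tilde G$ is an irreducible group in the sense of Definition~\ref{Definition:irreduciblegroup}.

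First I would observe that the prime-power hypothesis of Theorem~\ref{Theorem:2irreducibledim1} is automatic in this setting. By the argument already carried out at the start of the flat case, conditions (a) and (b) force all basic generators to share one common multiplier $\mu$ with $\mu^{\nu+1}=1$, so $\mu$ is a root of unity. But $\mu=f_j'(0)\in\rr$ for a real germ, and the only real roots of unity are $\pm1$; hence $\mu\in\{1,-1\}$ and its order is $1$ or $2$, in either case a prime power, which is exactly condition (c). By Theorem~\ref{Theorem:2irreducibledim1}, $\tilde G$ is finite. Because complexification is injective, $G\cong\tilde G$ is finite as well. A finite subgroup of $\Diff(\co,0)$ is analytically linearizable, hence conjugate to a finite group of rotations fixing $0$, which is cyclic; therefore $G$ is cyclic, in particular abelian. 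In an abelian irreducible group, condition (b) forces $f_1=\cdots=f_{\nu+1}$, and (a) then gives $f_1^{\nu+1}=\id$. Using $f_1'(0)=\mu\in\{1,-1\}$: if $\mu=1$ (the case $s=0$) then $f_1$ is tangent to the identity and of finite order, and writing $f_1(z)=z+a_kz^k+\cdots$ with $a_k\neq0$ one gets $f_1^n(z)=z+na_kz^k+\cdots$, which is never the identity, so $a_k=0$ and $f_1=\id$, making $G$ trivial; if $\mu=-1$ then $f_1^2$ is tangent to the identity and of finite order, hence $f_1^2=\id$ by the same computation, while $f_1\neq\id$, so $f_1$ has order $2$ and $|G|=2$.

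The hard part will not be computational but a matter of bookkeeping: one must check carefully that complexification genuinely produces an irreducible group to which Theorem~\ref{Theorem:2irreducibledim1} applies, namely that the homomorphism $G\to\tilde G$ is injective so that finiteness descends back to $G$, and that the conjugating elements witnessing (b) complexify to honest elements of $\tilde G$ rather than of the ambient group only. Once this is settled, the essential simplification is that a real multiplier satisfying $\mu^{\nu+1}=1$ is forced into $\{1,-1\}$, so the prime-power hypothesis comes for free and the complex theorem supplies all the finiteness, leaving only the elementary order count above.
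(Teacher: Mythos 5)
Your proof is correct and follows essentially the same route as the paper: complexify to get an irreducible subgroup of $\Diff(\mathbb C,0)$, apply Theorem~\ref{Theorem:2irreducibledim1}, pull finiteness back through the injective morphism $G\to G^{\mathbb C}$, and then bound the order using that a finite-order real analytic germ has order at most $2$. Your additional observation that hypothesis (c) is automatic in the real setting (since the common multiplier is a real root of unity, hence $\pm 1$) is a correct small refinement, but the argument is otherwise the one in the paper.
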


\begin{proof}
It is enough to consider the group $G^\mathbb C\subset \Diff(\mathbb
C,0)$ generated by the complexification of the maps
$f_j,\;j=1,...,\nu+1$. The group satisfies the hypotheses of
Theorem~\ref{Theorem:2irreducibledim1} and therefore it is finite.
Since the complexification defines an injective morphism $G \to
G^{\mathbb C}$ the group $G$ must finite. Since $G$ is analytically
linearizable, the conclusion follows from the fact that a finite
order  germ of real analytic diffeomorphism has order one or two.
\end{proof}

\section{Examples and counterexamples}
\label{section:examples} Now we give some examples illustrating our
results and the necessity of the hypotheses we make. The first two
examples show that the condition on the order of the multiplier of a
generator cannot be dropped in
Theorem~\ref{Theorem:2irreducibledim1}.

\begin{Example}\label{Contraexample1}
{\rm Consider $G\subset \Diff(\mathbb C,0)$ the subgroup generated
by the maps
$$f_1(z)=f_2(z)=f_3(z)=f_4(z)=\dfrac{z}{a}, f_5(z)=\dfrac{z}{a+z}\;\;
\mbox{ and }\;\;f_6(z)=\dfrac{z}{a-a^5z}$$where $a^6=1$ so that
$a=\dfrac{1}{1-a}$. We claim  that $G$ is irreducible and not
abelian. The first condition is satisfied
$$\begin{array}{r c l}f_1\circ f_2\circ f_3\circ
f_4\circ f_5\circ f_6(z)&=&f_1\circ f_2\circ f_3\circ
f_4\left(\dfrac{z}{a^2}\right)=\dfrac{z}{a^6}=z.
 \end{array}$$
To check the second condition take  $g_1(z)=f_1\circ f_5\circ
f_1^4(z)=\dfrac{z}{1+az}$ then  $g_1\in G$ and note that

$$f_5\circ g_1(z)=\dfrac{z}{a+(1+a^2)z}$$
$$g_1\circ f_1(z)=\dfrac{z}{a+az}.$$ Since
$a=\dfrac{1}{1-a}$ we have $1+a^2=a$ therefore $f_5\circ
g_1=g_1\circ f_1$. Similarly, since  $a^3=-1$ and $1+a^2=a$ we have
$f_1\circ g_2=g_2\circ f_6$. Finally, because  $1+a-a^5=1+a+a^2$ we
have  $f_5\circ g_3=g_3\circ f_6$. Consequently the $f_j$ are
pairwise conjugate in the group $G$. Finally, we observe that $G$ is
not abelian and in particular, it is not analytically linearizable:
indeed, if $g$ linearizes $G$, then $g\circ f_1\circ g^{-1}= f_1$. }
\end{Example}

Other examples of irreducible groups which are not finite are
briefly listed below:

\begin{Example}
\label{Example:more}{\rm  (1) Let $G_{10}\subset \Diff(\mathbb C,0)$
be the subgroup finitely generated by the maps
$$f_1(z)=\cdots=f_8(z)=\dfrac{z}{a}, f_9(z)=\dfrac{z}{a+z}\;\;
\mbox{ and }\;\;f_{10}(z)=\dfrac{z}{a-a^9z}$$where $a^{10}=1$  so
that $a^3+a=\dfrac{1}{1-a}$.

(2)  Consider $G_{12}\subset \Diff(\mathbb C,0)$ the subgroup
finitely generated by the maps
$$f_1(z)=\cdots=f_{10}(z)=\dfrac{z}{a},
f_{11}(z)=\dfrac{z}{a+z}\;\;\mbox{ and }\;\;
f_{12}(z)=\dfrac{z}{a-a^{11}z}$$where $a^{12}=1$ so that
$a=\dfrac{1}{1-a}$.

(3) Take $G_{12}^\prime \subset \Diff(\mathbb C,0)$ as the subgroup
finitely generated by the maps
$$f_1(z)=\cdots=f_{10}(z)=\dfrac{z}{a},
f_{11}(z)=\dfrac{z}{a+z}\;\;\mbox{ and }\;\;
f_{12}(z)=\dfrac{z}{a-a^{11}z}$$where $a^{12}=1$ so that
$a^3+a^2=\dfrac{1}{1-a}$.

(4) Let $G_{14}\subset \Diff(\mathbb C,0)$ be the subgroup finitely
generated by the maps
$$f_1(z)=\cdots=f_{12}(z)=\dfrac{z}{a},
f_{13}(z)=\dfrac{z}{a+z}\;\;\mbox{ and }\;\;
f_{14}(z)=\dfrac{z}{a-a^{13}z}$$where $a^{14}=1$ so that
$a^5+a^3+a=\dfrac{1}{1-a}$.

(5) Consider $G_{18}\subset \Diff(\mathbb C,0)$ the subgroup
finitely generated by the applications
$$f_1(z)=\cdots=f_{16}(z)=\dfrac{z}{a},
f_{17}(z)=\dfrac{z}{a+z}\;\;\mbox{ and }\;\;
f_{18}(z)=\dfrac{z}{a-a^{17}z}$$where $a^{18}=1$ so that
$a=\dfrac{1}{1-a}$.

(6) Put $G_{18}^\prime \subset \Diff(\mathbb C,0)$ as the subgroup
finitely generated by the applications
$$f_1(z)=\cdots=f_{16}(z)=\dfrac{z}{a},
f_{17}(z)=\dfrac{z}{a+z}\;\;\mbox{ and }\;\;
f_{18}(z)=\dfrac{z}{a-a^{17}z}$$where $a^{18}=1$ so that
$a^5+a^4+a^3=\dfrac{1}{1-a}$.
}
\end{Example}

We now show that condition (b) in the definition of irreducibility
(Definition~\ref{Definition:irreduciblegroup}) is essential in
Theorem~\ref{Theorem:2irreducibledim1}.
\begin{Example}
\label{Example:essential} {\rm Indeed, given $p \in \mathbb N$
consider $G=<f,f^{-1}>\subset\Diff(\co,0)$ where
$f(z)=\frac{z}{(1-z^p)^{1/p}}$, $f^{-1}(z)=\frac{z}{(1+z^p)^{1/p}}$.
Take $h(z)=e^{\pi i/p}z$. Note that $f\circ f^{-1}=\id$ and
$f(h(z))=\frac{e^{\pi i/p}z}{(1+z^p)^{1/p}}= e^{\pi
i/p}\frac{z}{(1+z^p)^{1/p}}=h(f^{-1}(z))$. Thus $f$ and $f^{-1}$ are
conjugate in $\Diff(\mathbb C,0)$,  but clearly not in $G$. Note
also that $f^{(n)}(z)=\frac{z}{(1-nz^p)^{1/p}}\neq z$ for all
$n\in\mathbb{N}$. Therefore $G$ it is not finite. }
\end{Example}


\section{Germs of foliations: proof of
Theorems~\ref{Theorem:firstintegral1} and
~\ref{Theorem:firstintegralreducible}}

We consider a germ $\fa$ of a codimension one holomorphic  foliation
singular at the origin $0 \in \mathbb C^n, n \geq 3$. We consider
the blow-up $\pi \colon \widetilde {\mathbb C^n} \to \mathbb C^n$ of
$\mathbb C^n$ at the origin $0 \in \mathbb C^n$ and denote by
$\tilde \fa$ the lifted foliation $\tilde \fa= \pi^*(\fa)$. We also
denote by $E=\pi^{-1}(0)\cong \mathbb P^{n-1}$ the exceptional
divisor of the blow-up.

\begin{proof}[Proof of Theorem~\ref{Theorem:firstintegral1}]
The proof goes as the proof of Theorem~0 in \cite{CL}. The key point
is then the following fact:
\begin{Claim}
The projective holonomy $H(\fa)$ is a finite group.
\end{Claim}
This is proved as an immediate consequence of the irreducibility of
the tangent cone, the fact that the projective holonomy is generated
by the basic holonomy generators, and
Theorem~\ref{Theorem:2irreducibledim1}.
\end{proof}

\begin{proof}[Proof of Theorem~\ref{Theorem:firstintegralreducible}]
Again the proof has steps in common with the proof of Theorem~0 in
\cite{CL}. As already mentioned (see the paragraph before
Theorem~\ref{Theorem:firstintegralreducible}), it is observed in
\cite{CL} that the irreducibility of the tangent cone implies that a
generic plane section of $\fa$ can be reduced with a single blow-up.
We have this same property by hypothesis now. The main point is
then, again, the following fact:
\begin{Claim}
The projective holonomy $H(\fa)$ is a finite group.
\end{Claim}
\begin{proof}[proof of the claim]
Let $L(\fa)=E\setminus  C(\fa)$ be the leaf of $\tilde \fa$ in the
blow-up $\widetilde{\mathbb C^n}$.  We shall prove that the image
$H(\fa)$ of any holonomy representation $\Hol\colon \pi_1(L(\fa))\to
\Diff(\mathbb C,0)$ is a finite group. Notice that the codimension
$\geq 3$ singularities of $\tilde \fa$ do not affect the fundamental
group of $L(\fa)$. Therefore  $L(\fa)$ has the homotopy type of the
complement of a normal crossing divisor in $\mathbb P^n$. By
Deligne's theorem this fundamental group is abelian. Moreover, this
group is generated by simple  loops around the codimension one
irreducible components of $ C(\fa)$. This proves that the group
$\pi_1(L(\fa))$ is the direct product of the groups
$\pi_1(E\setminus \Lambda_j), j=1,...,r$ where $\Lambda_j\subset E,
j=1,...,r$ are the codimension one irreducible components of
$C(\fa)\subset E$. This implies that any holonomy representation
$\Hol\colon \pi_1(L(\fa)) \to \Diff(\mathbb C,0)$ has as image an
abelian group $H(\fa)\subset \Diff(\mathbb C,0)$ which is a direct
product of groups $H_j=\Hol(\pi_1(E\setminus \Lambda_j))\subset
\Diff(\mathbb C,0)$. Each group $H_j\subset \Diff(\mathbb C,0)$ is
irreducible and is finite because of the hypothesis on the degree of
the components $\Lambda_j$ and of the conclusion of
Theorem~\ref{Theorem:2irreducibledim1}. Thus the group $H(\fa)$ is
abelian as well.
\end{proof}
Once we have proved that the projective holonomy is finite and
knowing that the generic hyperplane sections of the foliation can be
reduced with a single blow-up we can proceed as in \cite{CL} and
conclude.

\end{proof}

\section{Foliations in  projective spaces and local examples}

 We turn our attention to
the case of foliations in the  projective space $\mathbb P^n$ of
dimension $n \ge 2$. It is well-known that a codimension one
holomorphic foliation (with singularities) on a complex projective
space $\mathbb P^n$ can be defined in homogeneous coordinates
$(z_1,...,z_{n+1})$ in $\co^{n+1}$ by a differential 1-form
$$
\om = \sum_{i=1}^{n+1} a_i(z)dz_i \qquad z \in \co^{n+1} $$ where
the $a_i(z)$ are homogeneous polynomials of the same degree (without
common factors and) satisfying $\sum\limits_{i=1}^{n+1} z_i\,a_i(z)
= 0$,
 and the integrability
condition $\om \wedge d\om = 0$. The singular set of $\om$ is
$S(\om) := \{z \in \co^{n+1}; a_j(z) = 0\,,\,\, j=1,\ldots,n+1\}$.
We denote by $\widehat\fa(\om)$  the codimension one singular
holomorphic foliation on $\co^{n+1}$ defined by $\om$ with singular
set $\sing (\om)$. If $\pi\colon \co^{n+1}\backslash\{0\} \to
\mathbb P^n$ denotes the canonical projection, this induces a
foliation $\fa = \fa(\om)$ on $\mathbb P^n$ with singular set
$\sing\,\fa = \pi(S(\om))$ and $\pi^*(\fa) = \widehat\fa(\om)$. As
we have remarked above there is no loss of generality if we assume
that $codim\,\sing\,\fa \ge 2$. Before going into our main example,
we must recall the notion of Kupka singularity:

 \begin{Definition}[Kupka singularity] {\rm

Let $\omega=\sum_{j=1}^na_jdx_j$ be a holomorphic integrable 1-form
on a neighborhood $\U$ of $0\in \co^n$. We assume that $n\ge3$ and
that $\sing(\omega)=\{p\in \U\mid \omega(p)=0\}$ is a codimension
$\geq 2$ analytic subset. A singularity $q\in\sing(\omega)$ is a
{\it Kupka singularity\/} if $d\omega(q)\ne0$. We denote by
$K(\omega)$ the subset of Kupka singularities.}
 \end{Definition}
The main properties of the Kupka singularities are stated in
\cite{Omegar}, \cite{Kupka}. These singularities are of local
product type, by a foliation in dimension two.

\begin{Example}
\label{Example:Kupka}{\rm Let $k \in \mathbb N, k\geq 2$ be given.
Let also $a,\;b,\;c,\;\alpha,\;\beta,\;\gamma\in\co^*$ be fixed.  We
consider the integrable homogeneous 1-form $\Omega$  in $\co^4$ with
coordinates $(x,y,z,w)$  by:
$$\begin{array}{l l
l}\Omega&=&\left[-x^2(ay^{k-1}+bz^{k-1}+cw^{k-1})+\alpha
y^{k+1}+\beta z^{k+1}+\gamma
w^{k+1}\right]dx\\&&\\&&+xy^{k-2}(ax^2-\alpha
y^2)dy+xz^{k-2}(bx^2-\beta z^2)dz+xw^{n-2}(cx^2-\gamma
w^2)dw\end{array}$$

Notice that $\Omega ({\vec R})=0$ for the radial vector field ${\vec
R}=x\frac{\partial}{\partial x} + y \frac{\partial}{\partial y}+ z
\frac{\partial}{\partial z} + w\frac{\partial}{\partial w}$. Thus
$\Omega$ defines a codimension one holomorphic foliation $\fa$ of
degree $k$ in $\mathbb P^3$. This foliation leaves invariant  the
hyperplane of infinity $H=\pi(x=0)$, where
$\pi:\co^4\setminus\{0\}\to \mathbb P^3$ is the canonical
projection.
\begin{Claim}
We have $\sing(\fa)\cap H =S$ is irreducible and smooth. Moreover,
all singularities of $\fa$ in $H$ are of Kupka type.
\end{Claim}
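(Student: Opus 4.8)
The plan is to compute $S=\sing(\fa)\cap H$ explicitly, identify it as a smooth Fermat-type curve, and then read the Kupka condition off a direct computation of $d\Omega$. First I would locate the singular set on $H=\{x=0\}$. Writing $\Omega=A\,dx+B\,dy+C\,dz+D\,dw$, one sees that each of $B=xy^{k-2}(ax^2-\alpha y^2)$, $C=xz^{k-2}(bx^2-\beta z^2)$ and $D=xw^{k-2}(cx^2-\gamma w^2)$ is divisible by $x$, hence vanishes identically on $H$. Therefore a point of $H$ is singular for $\fa$ exactly when $A$ vanishes there, and $A|_{x=0}=\alpha y^{k+1}+\beta z^{k+1}+\gamma w^{k+1}$. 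Thus $S$ is the curve $\{x=0\}\cap\{\alpha y^{k+1}+\beta z^{k+1}+\gamma w^{k+1}=0\}$, a degree $k+1$ Fermat-type curve inside $H\cong\mathbb P^2$ with homogeneous coordinates $(y:z:w)$.

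Next I would prove smoothness and irreducibility of $S$. Setting $P=\alpha y^{k+1}+\beta z^{k+1}+\gamma w^{k+1}$, the gradient $\nabla P=(k+1)(\alpha y^k,\beta z^k,\gamma w^k)$ vanishes, since $\alpha,\beta,\gamma\neq 0$, only at $y=z=w=0$, which is not a point of $\mathbb P^2$. By the Jacobian criterion $S$ is smooth; in particular $P$ is reduced. A smooth plane curve is irreducible: two distinct components would meet by B\'ezout's theorem, and any such intersection point would be a singular point of $S$, a contradiction. Hence $S$ is smooth and irreducible, which is the first assertion.

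Then I would verify the Kupka condition by computing $d\Omega$. A key simplification is that $B$, $C$, $D$ depend only on the disjoint variable pairs $(x,y)$, $(x,z)$, $(x,w)$ respectively, so the coefficients of $dy\wedge dz$, $dy\wedge dw$, $dz\wedge dw$ in $d\Omega$ vanish identically, leaving
\[
d\Omega=(\partial_xB-\partial_yA)\,dx\wedge dy+(\partial_xC-\partial_zA)\,dx\wedge dz+(\partial_xD-\partial_wA)\,dx\wedge dw .
\]
Evaluating at $x=0$ gives
\[
d\Omega|_{x=0}=-(k+2)\bigl[\alpha y^k\,dx\wedge dy+\beta z^k\,dx\wedge dz+\gamma w^k\,dx\wedge dw\bigr],
\]
which vanishes only when $\alpha y^k=\beta z^k=\gamma w^k=0$, i.e. $y=z=w=0$, impossible on $S$. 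To descend to $\fa$ on $\mathbb P^3$, I would pass to the affine charts $\{w=1\}$, $\{z=1\}$, $\{y=1\}$, where (since $i_R\Omega=0$) the affine foliation form is obtained by restricting $\Omega$. In the chart $\{w=1\}$ the term $dx\wedge dw$ drops and the surviving part is $-(k+2)[\alpha y^k\,dx\wedge dy+\beta z^k\,dx\wedge dz]$, which can vanish at a point of $S$ only if $y=z=0$; but $(0:0:1)$ does not satisfy $P=0$ because $\gamma\neq 0$. Hence $d\omega\neq 0$ at every point of $S$ in this chart, and by the symmetry of $\Omega$ under permuting $(y,\alpha,a)$, $(z,\beta,b)$, $(w,\gamma,c)$ the same holds in the other two charts. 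Since every point of $\mathbb P^2$ has one of $y,z,w$ nonzero, all singularities of $\fa$ in $H$ are Kupka.

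The main obstacle I anticipate is the passage from the homogeneous $2$-form $d\Omega$ on $\co^4$ to the genuine Kupka condition for $\fa$ on $\mathbb P^3$: one must argue chart by chart and check that dropping the $dx\wedge dw$ (respectively $dx\wedge dz$, $dx\wedge dy$) term does not destroy non-vanishing along $S$, which is exactly what the curve equation $P=0$ together with $\alpha,\beta,\gamma\neq0$ guarantees. The computation of $d\Omega$ itself is routine once one observes the vanishing of the mixed terms. I would also remark that the exponent $w^{n-2}$ appearing in the last summand of $D$ should read $w^{k-2}$ for the stated homogeneity of degree $k+1$ and the symmetry used above to hold.
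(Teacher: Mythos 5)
Your proposal is correct, and it arrives at the same two computational facts as the paper --- that $S$ is the Fermat-type curve $\{x=0,\ \alpha y^{k+1}+\beta z^{k+1}+\gamma w^{k+1}=0\}$ and that $d\Omega|_{x=0}=-(k+2)\left(\alpha y^k\,dx\wedge dy+\beta z^k\,dx\wedge dz+\gamma w^k\,dx\wedge dw\right)$ --- but by a genuinely different route for the Kupka part. The paper exploits the closed-form structure of $\Omega$: it exhibits $F=xQ$ with $d(\Omega/F)=0$, so that $\Omega=x^{k+2}\,d(Q/x^{k+1})=-(k+1)Q\,dx+x\,dQ$ and hence $d\Omega=-(k+2)\,dQ\wedge dx$; nonvanishing of $d\Omega$ along $S$ then reduces to $dQ(p)\wedge dx(p)\neq 0$, and the same structure hands over the meromorphic first integral $Q/x^{k+1}$ for free. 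You instead compute $d\Omega$ coefficient by coefficient, using the observation that $B,C,D$ depend on disjoint pairs of variables to kill the mixed terms; the two computations agree. Two things you do that the paper omits and that are worth keeping: you actually prove smoothness (Jacobian criterion) and irreducibility (smooth plane curve plus B\'ezout) of $S$, which the paper only asserts; and you address the descent from $d\Omega\neq 0$ on $\co^4$ to the Kupka condition for $\fa$ in the affine charts of $\mathbb P^3$, checking that dropping the $dx\wedge dw$ (resp.\ $dx\wedge dz$, $dx\wedge dy$) term cannot create a zero on $S$. The paper passes over this last point silently; it can also be handled invariantly by noting that $i_{\vec R}\,d\Omega=(k+2)\,\Omega$ vanishes at points of $S$, so $d\Omega(p)$ annihilates the radial direction and therefore restricts nontrivially to any complementary affine hyperplane. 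Your closing remark that $w^{n-2}$ in the last summand of $\Omega$ should read $w^{k-2}$ is a correct catch of a typo needed for homogeneity and for the symmetry you invoke.
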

\begin{proof}[proof of the claim]
First of all we note that $$H\cap \sing(\fa)=\{[0:y:z:w];\;\alpha
y^{k+1}+\beta z^{k+1} +\gamma w^{k+1}=0\}$$ is a smooth irreducible
hypersurface of degree $k+1$. Furthermore
$$d\left(\dfrac{\Omega}{F}\right)=0$$where $F$  is
the homogeneous polynomial of degree $k+2$ given by
$$F=x^2\left[x\left(a\dfrac{y^{k-1}}{k-1}+
b\dfrac{z^{k-1}}{k-1}+c\dfrac{w^{k-1}}{k-1}\right)
-\alpha\dfrac{y^{k+1}}{k+1}-\beta\dfrac{z^{k+1}}{k+1}
-\gamma\dfrac{w^{k+1}}{k+1}+x^{k+1}\right].$$We can write  $F=xQ$,
where $Q$ is the polynomial of degree $k$ given by
$$Q=x^2\left(a\dfrac{y^{k-1}}{k-1}+b\dfrac{z^{k-1}}{k-1}
+c\dfrac{w^{k-1}}{k-1}\right)-\alpha\dfrac{y^{k+1}}{k+1}
-\beta\dfrac{z^{k+1}}{k+1}-\gamma\dfrac{w^{k+1}}{k+1}+x^{k+1}.$$
Thus
$$\Omega=x^{k+2} d\left(\dfrac{Q}{x^{k+1}}\right)=-(k+1)Qdx+xdQ.$$
Let now $p=(x,y,z,w) \in \mathbb C^4$ be such that
$\Omega(p)=d\Omega (p)=0$. From $\Omega(p)=0$ we have $Q(p)=x(p)=0$.
From $\Omega=-(k+1)Qdx+xdQ$ we get $d\Omega=-(k+2) dQ \wedge dx$.
Thus $d\Omega(p)=0$ implies $dQ(p)\wedge dx(p)=0$. From the
expression of $Q$ above, taking into account that $x(p)=0$, we have
$dQ(p)\wedge dx(p)=0 \implies y(p)=z(p)=w(p)=0$. Since
$(x,y,z,w)(p)\ne 0$ in homogeneous coordinates, we have that every
singularity in $S$ is of Kupka type.

A final remark is that $\dfrac{Q}{x^{k+1}}$ is a first meromorphic
integral of $\fa$.
\end{proof}
}
\end{Example}

The above example allows us to construct local examples of the
situation we deal with in Theorem~\ref{Theorem:firstintegral1}.

\begin{Example}
{\rm   Example~\ref{Example:Kupka}, originally placed in the
projective space $\mathbb P^3$ may be used in order to produce a
local example, illustrating our
Theorem~\ref{Theorem:firstintegralreducible}. Indeed, the idea is to
find a germ of foliation at $0 \in \mathbb C^3$ that produces after
a blow-up at the origin, a foliation having an invariant exceptional
divisor which will be bimeromorphically mapped into the hyperplane
at infinity $H\subset \mathbb P^3$ of the above example.  For this
consider the one-form $\Omega$ given in $\co^3$ by
$$\Omega=\left[-x^2(y^2+z^2)+y^4+z^4\right]dx+xy(x^2-y^2)dy+xz(x^2-z^2)dz.$$
Using the relations
$$x=\dfrac{1}{X},\;y=\dfrac{Y}{X^2},\;z=\dfrac{Z}{X^2}$$
we have the pull-back one-form
$$\begin{array}{r c l} \tilde{\Omega}&=&\left[-\dfrac{1}{X^2}\left( \dfrac{Y^2}{X^4}+\dfrac{Z^2}{X^4}\right)+\dfrac{Y^4}{X^8}+\dfrac{Z^4}{X^8}\right]d\left(\dfrac{1}{X}\right) +\dfrac{1}{X}\dfrac{Y}{X^2}\left(\dfrac{1}{X^2}-\dfrac{Y^2}{X^4}\right)d\left(\dfrac{Y}{X^2}\right)
\\&&\\&&+\dfrac{1}{X}\dfrac{Z}{X^2}\left(\dfrac{1}{X^2}-\dfrac{Z^2}{X^4}\right)d\left(\dfrac{Z}{X^2}\right)\\&&\\
&=&
\left[\dfrac{X^2(Y^2+Z^2)-Y^4-Z^4}{X^{10}}\right]dX+\dfrac{Y(X^2-Y^2)}{X^{10}}\left(XdY-2YdX\right)+\dfrac{Z(X^2-Z^2)}{X^{10}}\left(XdZ-2ZdX\right)

 \end{array} $$
and then
 $$\begin{array}{r c l} X^{10}\tilde{\Omega}
 &=&
 \left[-X^2(Y^2+Z^2)+Y^4+Z^4\right]dX+XY(X^2-Y^2)dY+XZ(X^2-Z^2)dZ.\end{array}$$
Thus by construction
$\alpha(X,Y,Z)=\left[-X^2(Y^2+Z^2)+Y^4+Z^4\right]dX+XY(X^2-Y^2)dY+XZ(X^2-Z^2)dZ$
is an integrable one-form in $\mathbb C^3$, it admits the following
first integral
\[
f=\frac{1}{X^4} \big[X^2 \big(\frac{Y^2 + Z^2}{2}\big) -
\frac{Y^4}{4} - \frac{Z^4} {4}\big].
\]

We then consider the birational  change of coordinates
$(x,y,z)=(1/x,y,z)=(X,Y,Z)$ which gives
\[
f(x,y,z)=x^2(y^2+z^2)/2 -x^4y^4/4-x^4z^4/4
\]
and the one-form
\[
\omega=[(y^2 + z^2) + x^2 ( y^4 + z^4)]dx + xy(1- x^2 y^2)dy + zx (1
- x^2 z^2)dz.
\]
Then, finally, $\omega$ is an integrable one-form in a neighborhood
of the origin $0 \in \mathbb C^3$. The corresponding foliation
$\fa_\omega: \omega=0$ has a tangent cone given by the degree $3$
homogeneous equation $(P_3=0)\subset \mathbb P^2$ for
$P_3=x(y^2+z^2)+xy^2+xz^2=2x(y^2+z^2)$. Therefore $C(\fa_\omega)$ is
a normal crossings divisor with components given by three projective
lines, induced by  $x=0, y + iz=0, y - iz=0$. Because of the Kupka
type of the singularities at infinity of the starting foliation
$\fa_\Omega: \Omega=0$, we have that a generic plane section
$\tau^*(\fa_\omega)$ of $\fa_\omega$, can be reduced with a single
blow-up $\pi\colon \widetilde {\mathbb C^2} \to \mathbb C^2$,
verifying then condition (3) in
Theorem~\ref{Theorem:firstintegralreducible}.

}
\end{Example}

\subsection{Formal case}

Theorem~\ref{Theorem:2irreducibledim1} also holds in the formal
framework. Indeed, the proofs are  {\it ipsis litteris} the same.
Denote by $\widehat\Diff(\mathbb C,0)$ the group of formal complex
diffeomorphims fixing the origin $0 \in \mathbb C$. An element $\hat
f\in \widehat\Diff(\mathbb C,0)$ is a formal power series $\hat
f(z)=\mu z + \sum\limits_{j=1}^\infty a_j z^j\in \mathbb C[[z]]$
with $\mu \ne 0$.

We may then state:

\begin{Theorem}
\label{Theorem:2irreducibledim1formal} Let $\hat G\subset \widehat
\Diff(\mathbb C,0)$ be an irreducible  group, ie., a finitely
generated subgroup with generators $\hat f_1,...,\hat f_{\nu+1}$
pairwise conjugate in $\hat G$ and such that $\hat f_{\nu+1}\circ
\cdots \circ \hat f_1=\id$. If the multiplier of any basic generator
is  a $p^s$-root of the unit, where $p$ is prime and
$s\in\mathbb{N}$, then $\hat G$ is a finite group, trivial in the
first case $s=0$.
\end{Theorem}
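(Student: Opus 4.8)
The plan is to show that the entire argument proving Theorem~\ref{Theorem:2irreducibledim1} is purely formal: it involves only composition of power series, differentiation (equivalently, manipulation of Taylor coefficients), and the product and conjugacy relations (a) and (b). Since every one of these operations is well defined in the ring $\co[[z]]$, both cases carry over \emph{verbatim} to $\widehat\Diff(\mathbb C,0)$, which is exactly the content of the remark that the proofs are \emph{ipsis litteris} the same. As a preliminary I would reproduce the observation that condition (b) forces all basic generators to share a common multiplier $\mu$ with $\mu^{\nu+1}=1$: writing a conjugating element $\hat g(z)=\alpha z+\cdots$ with $\alpha\neq0$ and comparing the linear coefficients on the two sides of $\hat f_i\circ\hat g=\hat g\circ\hat f_j$ gives $\hat f_i^\prime(0)=\hat f_j^\prime(0)$, exactly as in the convergent case. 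Hence if some generator has multiplier a $p^s$-root of unity, so does every generator.

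First I would treat the flat case $s=0$, so $\mu=1$ and every generator has the form $\hat f_i(z)=z+\cdots$. Here the Leibniz-type identity of Claim~\ref{Leibnitzgeral1} is an identity in $\co[[z]]$, its proof being a formal induction on $n$ using only the product rule, so it holds unchanged for formal series. The inductive argument of Claim~\ref{null1} then applies: evaluating the $n$-th derivative of the formal relation $\hat f_i\circ\hat g=\hat g\circ\hat f_j$ at $z=0$ and using the already-established vanishing of the lower coefficients yields $\hat f_i^{(n)}(0)=\hat f_j^{(n)}(0)$ for all $i,j$, whereupon the common coefficient is killed by relation (a). This forces $\hat f_1=\cdots=\hat f_{\nu+1}=\id$ and $\hat G=\{\id\}$, which is the asserted triviality when $s=0$.

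Next I would treat the remaining case $s\geq1$, with $\mu\neq1$ of order $p^s$, running the same order-by-order linearization. Assuming $\hat G$ has been conjugated so that $\hat f_i(z)=\mu z+t_i z^{k+1}+\cdots$, the two auxiliary morphisms are defined purely by coefficient arithmetic: the additive character $\varphi(az+bz^{k+1}+\cdots)=b/a$, used when $\nu+1\mid k$, and the affine representation $\psi(az+bz^{k+1}+\cdots)=(az+b)/a^{k+1}$, used when $\nu+1\nmid k$. Both are group homomorphisms for formal series by the same computation of the order-$(k+1)$ coefficient of a composition. Feeding relations (a) and (b) through $\varphi$, respectively invoking the affine lemma of \cite{CL} through $\psi$, yields in each case that either all $t_i=0$ or $t_1=\cdots=t_{\nu+1}$; in the latter situation the formal conjugation $z\mapsto z+\tfrac{t_1}{\mu-\mu^{k+1}}z^{k+1}$ raises the order of linearization by one.

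The step needing a word of care, and the only place where ``formal'' differs from ``convergent'', is the passage to the limit, and this is precisely where the formal framework is cleaner. The successive conjugators differ from the identity in strictly increasing order, so their infinite composition converges in the Krull topology to a genuine element $\hat h\in\widehat\Diff(\mathbb C,0)$ with $\hat h\,\hat G\,\hat h^{-1}=\langle\,\mu z\,\rangle$, \emph{without} any analytic convergence estimate. Since conjugation by $\hat h$ is a group isomorphism onto its image and $\langle\,\mu z\,\rangle$ is cyclic of order $p^s$, the group $\hat G$ is finite, completing the proof. I expect no genuine obstacle here; the main point is simply to record that finiteness follows from \emph{formal} linearization to a finite rotation group, so that no appeal to the convergence of the linearizing series is ever required.
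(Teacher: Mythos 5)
Your proposal is correct and follows exactly the route the paper takes: the paper's entire ``proof'' of Theorem~\ref{Theorem:2irreducibledim1formal} is the one-line remark that the arguments for Theorem~\ref{Theorem:2irreducibledim1} are \emph{ipsis litteris} the same in the formal category, and your write-up simply makes explicit why each step (coefficient comparison, the Leibniz-type induction, the two morphisms $\varphi$ and $\psi$, and the Krull-topology convergence of the linearizing conjugators) is purely formal. The only added value over the paper is your observation that the limiting step is actually cleaner formally, which is accurate but not a different method.
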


\bibliographystyle{amsalpha}

\end{document}